    \def\MR#1{}
\Crefname{Lemma}{Lemma}{Lemmas}
\Crefname{Theorem}{Theorem}{Theorems}
\def\cref@thmoptarg[#1]#2#3#4{%
    \ifhmode\unskip\unskip\par\fi%
    \normalfont%
    \trivlist%
    \let\thmheadnl\relax%
    \let\thm@swap\@gobble%
    \thm@notefont{\fontseries\mddefault\upshape}%
    \thm@headpunct{.}
    \thm@headsep 5\p@ plus\p@ minus\p@\relax%
    \thm@space@setup%
    #2
    \@topsep \thm@preskip               
    \@topsepadd \thm@postskip           
    \def\@tempa{#3}\ifx\@empty\@tempa%
      \def\@tempa{\@oparg{\@begintheorem{#4}{}}[]}%
    \else%
      \refstepcounter[#1]{#3}
      \@namedef{cref@#3@alias}{#1}
      \def\@tempa{\@oparg{\@begintheorem{#4}{\csname the#3\endcsname}}[]}%
    \fi%
    \@tempa}%
\theoremstyle{plain}
\newtheorem{Theorem}{Theorem}[section]
\newtheorem{Lemma}[Theorem]{Lemma}
\newtheorem{Corollary}[Theorem]{Corollary}
\newtheorem{Proposition}[Theorem]{Proposition}
\theoremstyle{definition}
\newtheorem{Assumptions and Discussion}[Theorem]{Assumptions and Discussion}
\newtheorem{Example}[Theorem]{Example}
\newtheorem{Definition}[Theorem]{Definition}
\newtheorem{Question}[Theorem]{Question}
\newtheorem{Remark}[Theorem]{Remark}
\newtheorem{Notation}[Theorem]{Notation}
\theoremstyle{remark}
\newtheorem*{acknowledgment*}{Acknowledgment}
\def\dim{\operatorname{dim}}
\def\Ht{\operatorname{ht}}
\def\ini{\operatorname{in}}
\def\ker{\operatorname{ker}}
\def\KK{{\mathbb K}}
\def\part{\operatorname{part}}
\newcommand\bda{{\bm a}}
\newcommand\bdb{{\bm b}}
\newcommand\bdc{{\bm c}}
\newcommand\bdd{{\bm d}}
\newcommand\bdX{{\bm X}}
\newcommand\bfX{\mathbf{X}}
\newcommand\bfS{\mathbb{S}}
\newcommand\calA{\mathcal{A}}
\newcommand\calB{\mathcal{B}}
\newcommand\calC{\mathcal{C}}
\newcommand\calF{\mathcal{F}}
\newcommand\calJ{\mathcal{J}}
\newcommand\calK{\mathcal{K}}
\newcommand\calL{\mathcal{L}}
\newcommand\calR{\mathcal{R}}
\newcommand\Spec{\operatorname{Spec}}
\newcommand{\rank}{\operatorname{rank}}
\def\reg{\operatorname{reg}}
\definecolor{MyGreen}{RGB}{34,136,51}
\newcommand{\Lr}{\ensuremath{\calL\times[r]}}
\begin{document}

\title[Gorenstein Special Fiber Rings of Ladder Determinantal Modules]{Gorenstein Special Fiber Rings of Ladder Determinantal Modules}

\author[ L. Fouli,  K.-N. Lin, H. Lindo,  M. Mostafazadehfard]{ Louiza Fouli,  Kuei-Nuan Lin, Haydee Lindo,  Maral Mostafazadehfard}

\thanks{2020 {\em Mathematics Subject Classification}.
    Primary 13C40, 
    13A30, 
    13H10; 
    Secondary 
    14M12, 
    14M15, 
    13C70. 
    }

\thanks{Keyword: Multi-Rees algebra, determinantal, Gorenstein, Hibi ring}

\address{Department of Mathematical Sciences, New Mexico State University, Las Cruces, NM 88003, USA}
\email{lfouli@nmsu.edu}

\address{Department of Mathematics, The Penn State University, McKeesport, PA, 15132, USA}
\email{linkn@psu.edu}

\address{Department of Mathematics, Harvey Mudd College, Claremont, CA, 91711, USA}
\email{hlindo@hmc.edu}

\address{Institute of Mathematics, Federal University of Rio de Janeiro, RJ,
21941-909, Brazil}
\email{maral@im.ufrj.br}

\begin{abstract}
A ladder determinantal module is an arbitrary direct sum of ideals of maximal minors of a generic ladder matrix. In this article, we give necessary and sufficient conditions for the special fiber ring of such modules to be Gorenstein. These conditions are expressed in terms of data obtained from the underlying matrix.
\end{abstract}

\maketitle

\section{Introduction}
Let $R$ be a Noetherian  ring or a standard graded ring  over a field $\KK$. For  a finitely  generated $R$-module $M$, the \emph{Rees algebra} of $M$, $\calR(M)$, is an algebra that naturally generalizes the notion of the Rees algebra of an ideal. It is defined as the quotient of the symmetric algebra of $M$ by its torsion.  Geometrically, $\calR(M)$ corresponds to the blowup of $\Spec(R)$ along $M$, which generalizes the notion of a blowup along an ideal. The Rees algebra of a module describes the closure of the graph of the rational map induced by $M$.
When $R$ is local with residue field $\KK$ or $R$ is a standard graded ring over a field $\KK$ one can consider another blowup algebra, namely, the special fiber ring of $M$, defined as $\calF(M)\coloneq \calR(M)\otimes_{R}\KK$. In other words, the special fiber ring of $M$ is the algebra generated by the images of the generators of $M$ over $\KK$. Geometrically,  $\calF(M)$
is the homogeneous coordinate ring of the exceptional fiber of the blowup of $\Spec(R)$ along $M$.

When a coordinate ring is Gorenstein, the associated algebraic variety has particularly nice duality properties, for example canonical sheaves become invertible, \cite[Chapter~3]{BHCMbook}.   This often implies that the variety has mild singularities and a well-behaved dualizing complex, properties that  are essential in birational geometry and the minimal model program \cite[Chapter~21]{EiComm}. 

The Gorenstein property of special fiber rings has been extensively studied in commutative algebra and algebraic geometry.  
Lima and Pérez \cite{LimaPerez2012} established necessary and sufficient conditions for the special fiber (fiber cone) of a Hilbert filtration to be Gorenstein, relating it to combinatorial invariants such as lengths and numbers of generators.  
Earlier, Heinzer, Kim, and Ulrich \cite{HeinzerKimUlrich2004} showed that in a Gorenstein ring $R$, under suitable hypotheses,  the associated graded ring of an ideal $I$  is Gorenstein if and only if $\calF(I)$ and $R/I$ are both Gorenstein.
Further, Kustin, Polini, and Ulrich \cite{KustinPoliniUlrich2015} analyzed special fiber rings of height-three Gorenstein ideals, demonstrating that these rings often inherit favorable duality and structural properties.

 Our main object of study is the special fiber of ladder determinantal modules. A ladder determinantal module is a module that is the direct sum of finitely many copies of the ideal of maximal minors of a generic ladder matrix, that is a matrix with zeros and indeterminates such that the nonzero entries form a ladder shape, see \Cref{sec:prelim} for more details.  Conca, Herzog, and Valla,  \cite{CHV96}, studied the Rees and special fiber algebras of rational normal scrolls and determined the defining equations of these algebras. Moreover, by studying the symmetry of the \(h\)-vector, they identified necessary and sufficient conditions under which the special fiber ring of these rational normal scrolls is Gorenstein.  Recent work has advanced our understanding of the special fiber ring of determinantal ideals: for ideals of $2\times 2$ minors of sparse \(2\times n\) matrices Celikbas et al.\ showed that the defining equations of the special fiber ring are the expected (Plücker-type) relations and used this to deduce normality, Cohen–Macaulayness, and many related invariants for the special fiber ring, \cite{CDFGLPS}.  
Ramkumar and Sammartano proved that for several families of \(2\)-determinantal ideals  the Rees algebra and the special fiber ring are Cohen–Macaulay and have quadratic (Koszul) presentations \cite{RS}.

Lin and Shen classified when the special fiber rings of secant varieties of rational normal scrolls are Gorenstein, \cite{LSSecantNormal}.  Later,
same authors determined the defining equations of the Rees algebra and the special fiber ring of ladder determinantal modules, \cite[Theorem~5.5]{LinShenLadder}. Recently, Costantini and coauthors calculated various algebraic invariants of the special fiber algebra of a ladder determinantal module in \cite{CFGLLLM}. 

It turns out that the initial algebra of the special fiber ring of a ladder determinantal module is a Hibi ring  \cite[Theorem~4.8]{LinShenLadder}.  Via \textsc{Sagbi} deformation, we prove that one can reduce the study of the Gorenstein property for $\calF(M)$ to the study of the Gorenstein property for the Hibi ring arising from the initial algebra of the special fiber ring of the ideal of maximal minors of the given ladder matrix, \Cref{r=1tor}.

It is well known that a Hibi ring is Gorenstein if and only if the distributive lattice associated to the ring has a pure join-irreducible poset, that is a poset where all the maximal chains have the same length, \cite{HibiDistLatt}. Recently, Miyazaki showed that the homogeneous coordinate ring of a Schubert variety is an algebra with straightening law generated by a distributive lattice and gave a combinatorial description of when a Schubert cycle is Gorenstein  using the join-irreducible poset of the Hibi ring, \cite{MiyazakiSchubert}. A Schubert cycle 
corresponds to maximal minors of a one-sided ladder matrix. In our work, we consider  two-sided ladder matrices and extend our study to special fiber rings of direct sums of ideals of maximal minors.

We now describe the main results of this work. In \Cref{sec:prelim}, we set up our notation and framework. In \Cref{sec:Poset}, we give an explicit description of the join-irreducible poset of the Hibi ring that is the initial algebra of the special fiber algebra of a ladder determinantal module, \Cref{cor: join irr}.
More precisely, we associate the ladder matrix with the join-irreducible poset of the Hibi ring, and one can read off the join-irreducible elements from a given ladder matrix, see \Cref{cor: join irr} and \Cref{examplePoset} for an instructive example.  
Building on this straightforward connection, in \Cref{thm: GorL} and \Cref{thm: GorM} we are able to provide necessary and sufficient conditions in terms of the matrix structure for  the special fiber ring of ladder determinantal modules to be Gorenstein. 
As an application of our results we obtain explicit calculations of the regularity, reduction number, and $a$-invariant of these algebras when they are Gorenstein, \Cref{cor: reg}. 
Finally, we consider the question whether these special fiber rings we considered are $F$-regular. When $\calF(M)$ is Gorenstein it follows that it is $F$-regular, \Cref{FRegular}. We conclude this work with an example showing that even without the Gorenstein property, one can still show that the special fiber ring of an ideal of maximal minors of a ladder matrix is $F$-regular, \Cref{ex: F-regular}.

We note here that the ideals we consider are ideals of maximal minors of a ladder matrix and they are in general different than ladder determinantal ideals considered for example in \cite{ConcaLadder}. Moreover, the special fiber rings we consider are not in general quotients of polynomial rings by determinantal ideals.

\section{Preliminaries and notations}
\label{sec:prelim}

We adopt the convention that if $p$ and $q$ are two positive integers with $p\le q$, the interval $[p,q]$ denotes the set $\{p,p+1,\dots,q-1,q\}$. When $p=1$, we denote the interval $[1,q]$ by $[q]$. 
We will use the notation in the following definition throughout the article.

\begin{Definition}
    \label{def:Ladder} 
    Let $\KK$ be a field and let $m,n,r$ be positive integers with $n<m$.  Write $[m]=S_1 \cup S_2 \cup \cdots \cup S_n$, where for each $i\in [n]$, $S_i\coloneqq [u_i, v_i]$ for some $u_i, v_i$ positive integers with $u_i < v_i$.  Without loss of generality, we may assume that $1=u_1<  u_2 <  \cdots <  u_n < m$ and $1< v_1<  v_2 < \cdots < v_n=m$.
     For each $i\in [n]$ and each $j\in S_i$, let $x_{i,j}$ be an indeterminate over the field $\KK$. Let $\bfS \coloneq \{S_1, \ldots, S_n\}$.

    \begin{enumerate}[a]
   \item    Let  $\bfX_{\bfS}$ be the $n\times m$ matrix whose $(i,j)$-th entry is $x_{i,j}$ when $j\in S_i$ and $0$ otherwise.  We say that $\bfX_{\bfS}$ is the \emph{ladder matrix} associated to $\bfS$. 
    \item Let $R\coloneq\KK[\bfX_{\bfS}]=\KK[x_{i,j}: i\in[n], j\in S_i]$ be a polynomial ring over $\KK$ and $L\coloneq I_n(\bfX_{\bfS})$ be the $R$-ideal generated by the $n\times n$ (maximal) minors of $\bfX_{\bfS}$. 
     \item Let  $\calL\coloneqq \{\bdc=(c_1, \ldots, c_n)\in S_1\times \cdots \times S_n : c_i<c_{i+1} \mbox{ for all } i\in [n-1]\}$. Notice that by the ladder shape of $\bdX_{\bfS}$ we have  $\det[\bdc]\neq 0$ for any $\bdc\in \calL$, where  $\det[\bdc]$ is the determinant of the $n\times n$ submatrix of $\bfX_{\bfS}$ whose columns are $c_1, \ldots, c_n$.  Therefore, the set $\{ \det[\bdc] : \bdc\in \calL\}$ is a minimal generating set of the ideal $L$.
     
     \item The module $M=\bigoplus_{i=1}^rL$ is called a \emph{ladder determinantal module} associated to the matrix $\bfX_{\bfS}$. Let $\calL\times [r]  \coloneqq \{(\bdc, j) : \bdc\in \calL, j\in [r]\}$ be the cartesian product of $\calL$ and $[r]$ and notice that the set $\{(\det[\bdc], j): (\bdc, j)\in \Lr\}$ is a minimal generating set for $M$.
     \item Let $\tau$ be the lexicographic order on the ring $R$ induced by the lexicographic order on the entries in $\bfX_{\bfS}$, that is $x_{i,j}>_{\tau}x_{l,k}$ if and only if $i<l$ or $i=l$ and $j<k$.
     \item Let $N=\bigoplus_{i=1}^r \ini_{\tau}(L)$ be the direct sum of the initial ideal of $L$ with respect to $\tau$.
     \item For all $i\in[n]$ and all $j\in [n-1]$, let $\Delta_i\coloneq v_i-u_i$, $\epsilon_j\coloneq u_{j+1}-u_j$, and  $\theta_j\coloneq v_{j+1}-v_j$. We adopt the convention that $\epsilon_n = \theta_0=\infty$.
\end{enumerate}
\end{Definition}

 We examine the blowup algebras associated with the module $M= \bigoplus_{j=1}^r L$. 
The \emph{Rees algebra}, $\mathcal{R}(M)$, and the \emph{special fiber ring}, $\mathcal{F}(M)$, of such a module $M$ are defined as follows: 
\begin{align*}
\mathcal{R}(M) &\coloneqq R[L t_1, \ldots, L t_r ] \\
& \, =\bigoplus_{a_i \in \mathbb{Z}_{\ge 0}} L^{\sum_{i=1}^ra_i}t_1^{a_1}\cdots t_r^{a_r} 
   \, \subseteq R[t_1,\ldots,t_r], \\ 
   \smallskip
\mathcal{F}(M) & \coloneqq \calR(M) \otimes_R \KK  \cong \KK[\det[\bdc]t_j : (\bdc,j)\in \Lr ].\\   
\end{align*}

One can realize the special fiber ring as a quotient of a polynomial ring in the following way.
Let $\{T_{\bdc,j}: (\bdc,j)\in \Lr\}$ be a new collection of indeterminates over $\KK$ and let  $T\coloneq \KK[\{T_{\bdc,j}: (\bdc,j)\in \Lr \}]$. We define the following surjective homomorphism: 

\begin{align*}
     \psi: T \longrightarrow \mathcal{F}(M),
\end{align*}
determined by $\psi(T_{\bdc,j})=\det[\bdc]t_j$ for each  $(\bdc,j)\in \Lr$.
Then $\calF(M)\cong T/\ker (\psi)$, and the
The ideal $\ker (\psi)$ is called the \emph{defining} or \emph{presentation ideal} of $\mathcal{F}(M)$.

In order to study the Gorenstein property of $\calF(M)$ we use the method of \textsc{Sagbi} degeneration. 
Recall that for a $\KK$-algebra  $A$, a set of elements $\calB\subseteq A$  is called a \emph{\textsc{Sagbi}} basis for $A$, with respect to a monomial order $>$, if $\ini_>(A) = \KK[\ini_>(\calB)]$. In the case of the ladder determinantal modules we are considering, Lin and Shen proved that the natural generators of $M$ form a \textsc{Sagbi} basis for $\calF(M)$ and used this \textsc{Sagbi} basis to determine the defining equations of $\calF(M)$.

\begin{Theorem}[{\cite[Theorem~4.8. 5.5, Corollary~4.10. 5.7]{LinShenLadder}}] \label{FiberLadder}
Let $M$ and $N$ be as in \Cref{def:Ladder}.
\begin{enumerate}[a]
    \item The set $
    \{\det[\bdc] t_i : (\bdc, i) \in \calL\times [r]\}$ is a \textsc{Sagbi} basis for $\calF(M)$ with respect to an extended monomial order $\tau'$, that is $\ini_{\tau'}(\calF(M))=\calF(N)$. Moreover, there exists a monomial  order $\omega$ on $T$ such that $\ini_{\omega}( \calK)=\ini_{\sigma} (\calJ)$, where $\calK, \calJ$ are the defining ideals of $\calF(M)$ and $\calF(N)$, respectively.

    \item    The $\KK$-algebra $\calF(N)$ is a Hibi ring and is isomorphic to $T/\calJ$ where 
    $$ \quad \calJ=(T_{\bda}T_{\bdb}-T_{\bda \wedge \bdb}T_{\bda \vee \bdb}: \bda, \bdb \in \calL\times[r]).$$

    \item  $\calF(M)$ and $\calF(N)$ are both Cohen--Macaulay normal domains.
\end{enumerate}
\end{Theorem}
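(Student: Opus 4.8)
This statement assembles several results of Lin and Shen \cite[Theorems~4.8 and 5.5, Corollaries~4.10 and 5.7]{LinShenLadder}; the plan for reconstructing the proof runs in four steps, since the combinatorics it produces is reused throughout. The first step is to identify the initial objects. Because $\tau$ is the lexicographic order with $x_{1,1}>x_{1,2}>\cdots$ and $\bfX_{\bfS}$ has ladder shape, a short induction over the rows shows that for every $\bdc=(c_1,\dots,c_n)\in\calL$ the $\tau$-largest monomial in the Laplace expansion of $\det[\bdc]$ is the main-diagonal term, so $\ini_{\tau}(\det[\bdc])=x_{\bdc}\coloneqq x_{1,c_1}\cdots x_{n,c_n}$. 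Invoking the classical fact that the maximal minors of a ladder matrix form a Gröbner basis of $L$ with respect to a diagonal order, one gets $\ini_{\tau}(L)=(x_{\bdc}:\bdc\in\calL)$, hence $N=\bigoplus_{j=1}^{r}\ini_{\tau}(L)$ and $\calF(N)=\KK[x_{\bdc}t_j:(\bdc,j)\in\Lr]$.

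For part (b) the plan is to present $\Lr$ as a finite distributive lattice. Order $\calL$ by componentwise comparison of tuples; since each $S_i=[u_i,v_i]$ is an interval and the strict inequalities $c_i<c_{i+1}$ are preserved by componentwise $\min$ and $\max$, the poset $\calL$ is closed under $\bdc\wedge\bdc'\coloneqq(\min(c_i,c_i'))_i$ and $\bdc\vee\bdc'\coloneqq(\max(c_i,c_i'))_i$, hence is a finite distributive lattice, and so is the product $\Lr=\calL\times[r]$ with $[r]$ a chain, with $(\bdc,j)\wedge(\bdc',k)=(\bdc\wedge\bdc',\min(j,k))$ and $(\bdc,j)\vee(\bdc',k)=(\bdc\vee\bdc',\max(j,k))$. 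Inside $R[t_1,\dots,t_r]$ one computes
\[(x_{\bdc}t_j)(x_{\bdc'}t_k)=x_{\bdc\wedge\bdc'}\,x_{\bdc\vee\bdc'}\,t_jt_k=\bigl(x_{\bdc\wedge\bdc'}t_{\min(j,k)}\bigr)\bigl(x_{\bdc\vee\bdc'}t_{\max(j,k)}\bigr),\]
and $(\bdc,j)\mapsto x_{\bdc}t_j$ is injective; checking that products along chains of $\Lr$ give linearly independent monomials identifies $\calF(N)$ with the Hibi ring of $\Lr$. Hibi's theorem \cite{HibiDistLatt} then supplies both the quadratic binomial presentation $\calJ=(T_{\bda}T_{\bdb}-T_{\bda\wedge\bdb}T_{\bda\vee\bdb})$ and the fact that these binomials are a Gröbner basis for a suitable term order $\sigma$ on $T$.

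For part (a), let $\tau'$ be any term order on $R[t_1,\dots,t_r]$ whose restriction to the $x_{i,j}$ is $\tau$; since every term of $\det[\bdc]t_j$ carries the common factor $t_j$, one has $\ini_{\tau'}(\det[\bdc]t_j)=x_{\bdc}t_j$, so $\calF(N)=\KK[\ini_{\tau'}(\text{generators})]\subseteq\ini_{\tau'}(\calF(M))$ automatically. For the reverse inclusion I would use the \textsc{Sagbi} lifting criterion: it suffices to lift each quadratic generator of $\calJ$ — which already generate (indeed form a Gröbner basis of) the relations among the initial monomials — to an element of $\calK$. For incomparable $(\bdc,j)$ and $(\bdc',k)$, the straightening law for minors of $\bfX_{\bfS}$, namely the restriction to the sublattice $\calL$ of the classical Plücker straightening, expresses $\det[\bdc]\det[\bdc']-\det[\bdc\wedge\bdc']\det[\bdc\vee\bdc']$ as a $\KK$-combination of products $\det[\bde]\det[\bde']$ that are strictly smaller in the relevant order; multiplying by $t_jt_k$ and pulling back along $\psi$ produces the desired element of $\calK$ whose $\tau'$-leading monomial is exactly that of the Hibi binomial. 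This shows the natural generators form a \textsc{Sagbi} basis, and the same lifted relations give $\ini_{\sigma}(\calJ)\subseteq\ini_{\omega}(\calK)$ for the term order $\omega$ on $T$ induced by $\tau'$; since a graded algebra and its initial algebra share a Hilbert function, a dimension count in each degree forces both $\ini_{\tau'}(\calF(M))=\calF(N)$ and $\ini_{\omega}(\calK)=\ini_{\sigma}(\calJ)$.

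Finally, part (c) follows by ascent along this degeneration. The Hibi ring $\calF(N)$ is a normal Cohen--Macaulay domain: normality holds because it is the semigroup ring of an order polytope, and Cohen--Macaulayness of Hibi rings is classical \cite{HibiDistLatt}. Since $\ini_{\tau'}$ is multiplicative, $\calF(N)$ being a domain forces $\calF(M)$ to be one; and along the flat $\KK[s]$-family whose fibers over $s=0$ and $s=1$ are $\calF(N)$ and $\calF(M)$ — in which $s$ and $s-1$ are nonzerodivisors — the Cohen--Macaulay and normal properties of the special fiber pass to $\calF(M)$ by the standard ascent results for \textsc{Sagbi} degenerations (see, e.g., \cite{CHV96}). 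I expect the genuinely delicate step to be the lifting in part (a): one must check that straightening a product of two incomparable ladder minors never leaves the span of products indexed by $\calL$ and strictly decreases in the chosen order, so that subduction terminates; this is precisely where the ladder shape, the interval structure of the sets $S_i$, and the lexicographic order $\tau$ have to be used together. Everything else is bookkeeping on top of Hibi's theorem and the general machinery of \textsc{Sagbi} deformations.
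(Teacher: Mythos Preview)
The paper does not supply its own proof of this theorem: it is quoted verbatim as a compilation of results from Lin and Shen \cite[Theorems~4.8, 5.5, Corollaries~4.10, 5.7]{LinShenLadder}, and the paper proceeds to use it as a black box. Your proposal is therefore not competing with anything in the present paper; it is a sketch of the Lin--Shen arguments themselves, and as such it is accurate in outline --- diagonal initial terms, identification of $\calF(N)$ with the Hibi ring of the distributive lattice $\Lr$, \textsc{Sagbi} lifting via Pl\"ucker-type straightening, and transfer of Cohen--Macaulayness and normality along the flat degeneration --- and correctly flags the straightening step as the only place requiring real work.
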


We note here that $\calF(N)$ is a Hibi ring associated to the distributive lattice $\Lr$. Indeed, the set $\Lr$ admits a partial order defined by 
\[(\bda,j_1)=(a_1,\ldots,a_n,j_1)\ge (\bdb,j_2)=(b_1,\ldots, b_n,j_2)\]  if and only if 
\(a_i\ge b_i \text{ for all } i\in [n] \text{ and } j_1\ge j_2 ,\) or equivalently, $[(\bda,j_1)]_i\ge [(\bdb,j_2)]_i$ for all $i\in [n+1]$, where the notation $[-]_i$ is used to denote the $i$-th entry of the corresponding tuple.
 Under this partial ordering, $\calL \times [r]$ becomes a distributive lattice with meet and join defined by:
 \[  
 \bdc\wedge \bdd=\min\{\bdc, \bdd\} \ \mbox{ and } \bdc\vee \bdd=\max\{\bdc, \bdd\} \mbox{ for any } \bdc, \bdd\in \Lr,\]
 where the minimum and maximum of two tuples  are the vectors of the component wise minimum and maximum, respectively. In other words,  for  $\bdc, \bdd\in \Lr$ we have $[\bdc\wedge \bdd]_i=\min\{[\bdc]_i, [\bdd]_i\}$ and $[\bdc\vee \bdd]_i=\max\{[\bdc]_i, [\bdd]_i\}$ for all $i\in [n+1]$.

 We close this section with the following remark.

\begin{Remark}\label{shrinkmatrix}
For all $i \in [2,n]$, let $S_i' = S_i$ if $u_{i-1} < u_i$, and let $S_i' = [u_i + 1, v_i]$ whenever $u_{i-1} = u_i$.  
Let $\bfS' = \{S_1, S_2', \ldots, S_n'\}$, and let ${\bfX}_{\bfS'}$ be the ladder matrix corresponding to $\bfS'$.  
Notice that $\ini_{\tau}(I_n(\bfX_{\bfS})) = \ini_{\tau}(I_n(\bfX_{\bfS'}))$.  
Furthermore, since $\tau$ is a diagonal monomial order, we may assume that $u_i \le v_{i-1} + 1$ for all $i \in [2,n]$.  
Hence, 
we may assume without loss of generality that $u_i < u_{i+1}$ and $v_i < v_{i+1}$ as in  \Cref{def:Ladder}. If $u_i=v_i$ for some $i\in [n]$, then $L=x_{i,u_i}I_{n-1}(\bfX'_{\bfS})$, where $\bfX'_{\bfS}$ is the ladder matrix obtained from $\bfX_{\bfS}$ after removing row $i$. Note that $\calF(M) \cong \calF(\bigoplus_{i=1}^r I_{n-1}(\bfX'_{\bfS})$ and thus we may assume without loss of generality that $u_i<v_i$ for all $i\in [n]$ as in \Cref{def:Ladder}. Moreover, under our assumptions we cover the case of a generic matrix by considering $\bfX_{\bfS}$ with $\epsilon_i=1$ and $\theta_i=1$ for all $i\in [n-1]$. Finally, if $n=m$, then $\calF(M)$ is a polynomial ring and there is nothing to prove in this case. Therefore we may assume that $n<m$. 
\end{Remark}

\section{The join-irreducible poset}\label{sec:Poset}

Through this article we will assume the notation as in \Cref{def:Ladder}. We begin by introducing a list of tuples that we will use to form the collection of join-irreducible elements of the poset $\Lr$.

\begin{Definition} \label{def: join irr}
For each $i\in [n]$ and each $j \in [u_i+1, v_i]$ set
    \[\bda_{i,j}\coloneq (u_1, u_2, \ldots, u_{i-1}, j,\max\{j+1, u_{i+1}\}, \ldots, \max\{j+(n-i), u_n\}).\]
For all $k\in[2,r]$ define $$\bda_{n+1,k}\coloneq 
    (u_1,\ldots,u_n,k).$$  
\end{Definition}

\begin{Remark}
By construction,  each $\bda_{i,j}$ is determined by its $i$-th component $[\bda_{i,j}]_{i}$ and for fixed $i$ we have  $|\{\bda_{i,j}\}_{j \in [u_i+1,v_i]}| = \Delta_i$ and $\bda_{i,j} \in \calL$. Indeed, for any $s\in [0,n-i]$ if $$[\bda_{i,j}]_{i+s} = \max\{j+s, u_{i+s}\}=j+s,$$ then  
 $u_{i+s} \le j+ s \le v_i +s \le v_{i+s} $, where the last inequality follows by the ladder structure of the matrix. Therefore, $u_{i+s} \le [\bda_{i,j}]_{i+s} \le v_{i+s}$ implying $\bda_{i,j} \in \calL$.
\end{Remark}

\begin{Definition} Let $P$ be any poset and $\bda ,\bdb \in P$, with $\bdb < \bda$. We say that $\bda$ \emph{covers} $\bdb$ and write $\bdb\lessdot \bda $ when there is no $\bdc\in P$ with $\bdb<\bdc<\bda$. 
\end{Definition}

\begin{Remark}
   A join-irreducible element of a poset $P$ is any element that covers exactly one other element in $P$. Our goal is to determine the join-irreducible elements of $\calL$ and to understand the poset structure of the poset of join-irreducible elements of $\Lr$ and $\calL$. 
   Note that when restricting to the set of join-irreducible elements, a join-irreducible element may cover several other join-irreducible elements; see \Cref{lem: coverinP}.  
\end{Remark}

The following result establishes that the elements we defined in \Cref{def: join irr} are in fact the join-irreducible elements of $\calL$.

\begin{Proposition}\label{prop:join-irrL}
The set
\[ P_\calL := \{\bda_{i,j}: i\in[n], j\in[u_{i}+1, v_i]\} \]
is the set of join-irreducible elements of $\calL$. 
\end{Proposition}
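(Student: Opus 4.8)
The plan is to argue directly about the cover relations of the poset $\calL$. Recall from \Cref{sec:prelim} that $\calL$, with the componentwise partial order, is a finite distributive lattice (a sublattice of the product of chains $S_1\times\cdots\times S_n$), with least element $(u_1,\dots,u_n)$, and that an element is join-irreducible precisely when it covers exactly one element. Thus the statement follows once one knows, for each $\bdd=(d_1,\dots,d_n)\in\calL$, which elements it covers.

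The key input is a lemma describing covers: \emph{for $\bdd\in\calL$, the elements of $\calL$ covered by $\bdd$ are exactly the tuples obtained by decreasing a single entry $d_p$ by one, as $p$ ranges over $\calD(\bdd)\coloneqq\{p\in[n] : d_p>u_p \text{ and } (p=1 \text{ or } d_p>d_{p-1}+1)\}$.} One inclusion is a direct check: if $p\in\calD(\bdd)$, decreasing $d_p$ keeps all entries in their intervals $S_i$ and keeps the tuple strictly increasing, so the result lies in $\calL$, and it is a cover since no point of $\ZZ^n$ (a fortiori none of $\calL$) lies strictly between it and $\bdd$. For the reverse inclusion, given $\bdc\lessdot\bdd$ let $p$ be the least index with $c_p<d_p$; then $c_{p-1}=d_{p-1}<c_p\le d_p-1$ together with $d_p\ge u_p+1$ show that decreasing $d_p$ by one produces an element $\bde\in\calL$ with $\bdc\le\bde<\bdd$, forcing $\bdc=\bde$ and hence $p\in\calD(\bdd)$. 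In particular, $\bdd$ is join-irreducible if and only if $|\calD(\bdd)|=1$.

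Granting this lemma, the inclusion $P_\calL\subseteq\{\text{join-irreducibles of }\calL\}$ reduces to checking $\calD(\bda_{i,j})=\{i\}$. Here $i\in\calD(\bda_{i,j})$ because $[\bda_{i,j}]_i=j\ge u_i+1>u_i$ and, when $i\ge2$, $[\bda_{i,j}]_{i-1}=u_{i-1}\le u_i-1\le j-2$; no index $q<i$ lies in $\calD(\bda_{i,j})$ since there $[\bda_{i,j}]_q=u_q$; and for $q=i+s$ with $1\le s\le n-i$ one uses the strict inequality $[\bda_{i,j}]_{i+s-1}<[\bda_{i,j}]_{i+s}=\max\{j+s,u_{i+s}\}$ to conclude that either $[\bda_{i,j}]_{i+s}=u_{i+s}$ or $[\bda_{i,j}]_{i+s}=[\bda_{i,j}]_{i+s-1}+1$, so $q\notin\calD(\bda_{i,j})$ in either case.

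For the reverse inclusion, suppose $\bdd\in\calL$ is join-irreducible with $\calD(\bdd)=\{i\}$, and set $j\coloneqq d_i\in[u_i+1,v_i]$; I would reconstruct $\bdd$ entry by entry using only the conditions ``$q\notin\calD(\bdd)$'' for $q\ne i$. An induction on $q=1,\dots,i-1$ forces $d_q=u_q$, since at each step ``$q\notin\calD(\bdd)$'' together with $d_q>d_{q-1}=u_{q-1}$ and $u_{q-1}<u_q$ leaves only $d_q=u_q$. Then an induction on $s=0,1,\dots,n-i$ forces $d_{i+s}=\max\{j+s,u_{i+s}\}$, after which $\bdd=\bda_{i,j}\in P_\calL$. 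I expect this last induction to be the main obstacle: ``$i+s\notin\calD(\bdd)$'' only provides the disjunction $d_{i+s}\le u_{i+s}$ or $d_{i+s}\le d_{i+s-1}+1$, and matching it against the inductively known value of $d_{i+s-1}$ (itself a $\max$), using $\bdd\in\calL$ and $u_{i+s-1}<u_{i+s}$, requires a short but somewhat fiddly case analysis; the left-hand induction and the computation of $\calD(\bda_{i,j})$ are routine by comparison.
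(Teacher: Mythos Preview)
Your argument is correct, and it takes a genuinely different route from the paper's.  The paper avoids the reverse inclusion entirely: it quotes the cardinality $|P|=\sum_{i=1}^n\Delta_i$ from \cite[Corollary~4.24]{CFGLLLM}, observes $|P_\calL|$ equals the same number, and then only proves $P_\calL\subseteq P$.  That single inclusion is established not by classifying all covers, but by showing directly that every $\bdc<\bda_{i,j}$ must have $[\bdc]_i<j$, so the tuple $\bdb_{i,j}$ obtained from $\bda_{i,j}$ by decreasing the $i$-th entry is the unique element covered.  Your approach instead gives a complete description of the cover relation in $\calL$ (the set $\calD(\bdd)$) and uses it for both inclusions; this is more self-contained, since it does not rely on the external count of join-irreducibles, at the cost of the extra inductive reconstruction of $\bdd$ from $\calD(\bdd)=\{i\}$.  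The fiddly case analysis you flag for the right-hand induction does go through (the key point being that in the subcase $d_{i+s}=u_{i+s-1}+1$ one uses both $d_{i+s}\ge u_{i+s}$ and $u_{i+s-1}<u_{i+s}$ to force $u_{i+s-1}+1=u_{i+s}$), so there is no gap.
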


\begin{proof}
Let $P$ be the set of join-irreducible elements of $\calL$. We claim that $P=P_{\calL}$. By  \cite[Corollary~4.24]{CFGLLLM} it follows that $|P|=\sum_{i=1}^{n}\Delta_i$ and since $|P_{\calL}|=\sum_{i=1}^{n}\Delta_i$, the result follows once we show $P_{\calL}\subseteq P$.

We claim that for any $\bdc=(c_1,\ldots,c_n)\in \calL$, if $\bdc < \bda_{i,j}$, then $[\bdc]_i<[\bda_{i,j}]_i=j$. Indeed, for any $t\in [1,i-1]$ we have $[\bdc]_t=c_t=u_t=[\bda_{i,j}]_t$. Since $\bdc<\bda_{i,j}$, there exists $s\ge i$ such that $[\bdc]_s=c_s<[\bda_{i,j}]_s$ and $[\bdc]_t=[\bda_{i,j}]_t$ for all $t\le s-1$. If $s>i$, then by the construction of $\calL$ we have $u_s\le [\bdc]_{s}<[\bda_{i,j}]_s=\max\{j+s-i,u_s\}=j+s-i$. Moreover, by \Cref{def: join irr} we have $[\bda_{i,j}]_{s-1}=\max\{j+s-i-1,u_{s-1}\}=j+s-i-1$. Then 
$$[\bdc]_{s-1}=[\bda_{i,j}]_{s-1}=j+s-i-1<[\bdc]_{s}<[\bda_{i,j}]_s=j+s-i$$ which is impossible. Therefore, $s=i$ and $[\bdc]_i<[\bda_{i,j}]_i=j$.

Now define 
\[
\bdb_{i,j}\coloneq (u_1,\ldots,u_{i-1},\,j-1,\,\max\{j+1, u_{i+1}\}, \ldots, \max\{j+(n-i), u_n\}).
\]
Then $\bda_{i,j}$, $\bdb_{i,j}\in \calL$ and $\bdb_{i,j}<\bda_{i,j}$. If $\bdc<\bda_{i,j}$, then by the argument above we have $[\bdc]_t=[\bda_{i,j}]_t=[\bdb_{i,j}]_{t}$ for all $t\in [i-1]$ and $[\bdc]_i<[\bda_{i,j}]_i=j$. Hence $[\bdc]_i\le [\bdb_{i,j}]_i$. Moreover, $[\bdc]_t\le [\bda_{i,j}]_t=[\bdb_{i,j}]_t$ for all 
$t\neq i$. In particular, $\bdc\le \bdb_{i,j}$ and therefore, $\bdb_{i,j}\lessdot \bda_{i,j}$
, that is $\bda_{i,j}$ is a join-irreducible element of $\calL$ as claimed. 
\end{proof}

\begin{Corollary}\label{cor: join irr}
    The join-irreducible elements of $\calL\times [r]$ are precisely the elements \[ P_{\Lr} := \{(\bda_{i,j},1): i\in[n], j\in[u_i+1, v_i]\}\cup \{\bda_{n+1,k}: k\in [2,r]\}.\]
\end{Corollary}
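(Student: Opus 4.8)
The plan is to reduce the statement about join-irreducibles of the product lattice $\calL\times[r]$ to the already-established description of the join-irreducibles of $\calL$ (\Cref{prop:join-irrL}) together with the elementary structure theory of products of distributive lattices. Recall the general fact: if $P$ and $Q$ are finite lattices, then $(p,q)$ is join-irreducible in $P\times Q$ if and only if either $p$ is join-irreducible in $P$ and $q$ is the bottom element of $Q$, or $p$ is the bottom element of $P$ and $q$ is join-irreducible in $Q$. Here $Q=[r]$ is a chain whose bottom element is $1$ and whose join-irreducible elements are exactly $\{2,3,\dots,r\}$ (each $k\in[2,r]$ covers only $k-1$), while $P=\calL$ has bottom element $\bdc_0\coloneq(u_1,u_2,\dots,u_n)$, which lies in $\calL$ since $u_i<u_{i+1}\le v_{i+1}$ for all $i$.

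First I would verify that $\bdc_0=(u_1,\dots,u_n)$ is indeed the minimum of $\calL$: for any $\bdc=(c_1,\dots,c_n)\in\calL$ we have $c_i\ge u_i$ for every $i$ by definition of $\calL$, so $\bdc\ge\bdc_0$ componentwise, hence $\bdc\ge\bdc_0$ in $\calL\times[r]$ for the tuple $(\bdc_0,1)$. Next, I would apply the structure fact above. The join-irreducibles of $\calL\times[r]$ split into two families: (i) $(\bdp,1)$ with $\bdp$ join-irreducible in $\calL$; by \Cref{prop:join-irrL} these are exactly $(\bda_{i,j},1)$ for $i\in[n]$, $j\in[u_i+1,v_i]$; and (ii) $(\bdc_0,k)$ with $k\in[2,r]$. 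It remains only to observe that $(\bdc_0,k)=(u_1,\dots,u_n,k)=\bda_{n+1,k}$ by \Cref{def: join irr}. This matches $P_{\Lr}$ exactly.

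For completeness I would include a short self-contained proof of the product fact rather than merely cite it, since it is quick: if $(\bdp,k)$ covers a unique element, note that $(\bdp,k)$ covers both $(\bdp',k)$ for each lower cover $\bdp'\lessdot\bdp$ in $\calL$ and $(\bdp,k')$ for each lower cover $k'\lessdot k$ in $[r]$, and these are all distinct; so uniqueness forces $\bdp$ to have at most one lower cover and $k$ at most one lower cover, with at least one of them being the bottom (a bottom element has no lower cover). Conversely, if $\bdp$ is join-irreducible in $\calL$ with unique lower cover $\bdp'$ and $k=1$ is the bottom of $[r]$, the only element $(\bdp,1)$ covers is $(\bdp',1)$, so $(\bdp,1)$ is join-irreducible; symmetrically for $(\bdc_0,k)$ with $k$ join-irreducible in $[r]$.

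The argument is essentially routine given \Cref{prop:join-irrL}; the only mild subtlety — and the place to be careful — is bookkeeping the covers in the product and confirming the bottom element of $\calL$ is genuinely $(u_1,\dots,u_n)$ and that it coincides with the "tail" of each $\bda_{n+1,k}$, so that the two families assemble into precisely the stated set $P_{\Lr}$ with no overlaps and no omissions.
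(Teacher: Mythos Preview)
Your proof is correct and follows essentially the same route as the paper: both arguments analyze lower covers in $\calL\times[r]$ by splitting into the cases $k=1$ and $k\ge 2$, reducing the first to \Cref{prop:join-irrL} and the second to identifying the bottom element $(u_1,\dots,u_n)$ of $\calL$. The only notable difference is packaging: you frame the cover analysis as the standard join-irreducible-of-a-product lemma and prove it directly, whereas the paper does the same case analysis inline and additionally opens with a cardinality count (citing $|P|=|P_{\Lr}|$ from \cite{CFGLLLM}) that is in fact unnecessary once the biconditionals in each case are established---so your version is marginally more self-contained.
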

   \begin{proof}
        Let $P$ be the set of join-irreducible element of $\Lr$. By \cite[Corollary~4.24]{CFGLLLM} we have $|P|=|P_{\Lr}|$ and as before it suffices to show that $P_{\Lr}\subseteq P$. Let $(\bdc,k)\in \Lr$. If $k=1$, then $(\bdc,1)\in P$ if and only if $\bdc\in P_{\calL}$. If $k\ge 2$, then $(\bdc,k-1)\lessdot (\bdc,k)$ and $(\bdd, k)\lessdot (\bdc,k)$ for any  $\bdd,\bdc\in \calL$ with $\bdd\lessdot \bdc$. Therefore when $k\ge 2$, $(\bdc, k)\in P$ if and only if there does not exist $\bdd\in \calL$ with $\bdd \lessdot \bdc$. The latter means that $\bdc=(u_1, \ldots, u_n)$ and the result follows. 
       \end{proof}

Having determined the elements of $P_{\Lr}$ or equivalently the elements of $P_\calL$, we turn our attention to the poset structure of $P_{\calL}$.

\begin{Lemma} \label{cor: north east}
       Let $\bda_{i,j}, \bda_{k,l}\in P_{\calL}$. Then $\bda_{k,l}\le \bda_{i,j}$ if and only if $k\ge i$ and $l-j\le k-i$.
\end{Lemma}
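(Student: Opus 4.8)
The plan is to prove both implications directly from the explicit formula in \Cref{def: join irr}, working componentwise. Recall that for $\bda_{i,j}$ with $j\in[u_i+1,v_i]$ we have $[\bda_{i,j}]_t=u_t$ for $t<i$ and $[\bda_{i,j}]_{i+s}=\max\{j+s,u_{i+s}\}$ for $s\in[0,n-i]$; similarly $[\bda_{k,l}]_t=u_t$ for $t<k$ and $[\bda_{k,l}]_{k+s}=\max\{l+s,u_{k+s}\}$ for $s\in[0,n-k]$. The partial order is coordinatewise, so $\bda_{k,l}\le\bda_{i,j}$ means $[\bda_{k,l}]_t\le[\bda_{i,j}]_t$ for all $t\in[n]$.

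For the forward direction, suppose $\bda_{k,l}\le\bda_{i,j}$. First I would show $k\ge i$: if instead $k<i$, then looking at coordinate $t=i$ we have $[\bda_{i,j}]_i=j$ while $[\bda_{k,l}]_i=\max\{l+(i-k),u_i\}\ge l+(i-k)$; on the other hand, at coordinate $t=k$ (where $k<i$) we get $[\bda_{k,l}]_k=l\le[\bda_{i,j}]_k=u_k$, so $l\le u_k$, but also $l\ge u_k+1$ by the range of $l$, a contradiction. (Actually the cleaner argument: since $[\bda_{k,l}]_k=l\ge u_k+1>u_k=[\bda_{i,j}]_k$ when $k<i$, the inequality $\bda_{k,l}\le\bda_{i,j}$ already fails at coordinate $k$.) Hence $k\ge i$. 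Now, with $k\ge i$, look at coordinate $t=k$: we have $[\bda_{k,l}]_k=l$ and $[\bda_{i,j}]_k=[\bda_{i,j}]_{i+(k-i)}=\max\{j+(k-i),u_k\}$. The inequality $l\le\max\{j+(k-i),u_k\}$ combined with $l\ge u_k+1>u_k$ forces $l\le j+(k-i)$, i.e. $l-j\le k-i$, as required.

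For the converse, assume $k\ge i$ and $l-j\le k-i$; I must verify $[\bda_{k,l}]_t\le[\bda_{i,j}]_t$ for every $t$. For $t<i\le k$ both sides equal $u_t$. For $i\le t<k$, $[\bda_{k,l}]_t=u_t$ while $[\bda_{i,j}]_t=\max\{j+(t-i),u_t\}\ge u_t$, so the inequality holds. For $t\ge k$, write $t=k+s=i+(s+k-i)$ with $s\ge0$; then $[\bda_{k,l}]_t=\max\{l+s,u_t\}$ and $[\bda_{i,j}]_t=\max\{j+s+(k-i),u_t\}$, and since $l+s\le j+(k-i)+s$ by hypothesis, we get $\max\{l+s,u_t\}\le\max\{j+s+(k-i),u_t\}$. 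This covers all coordinates, so $\bda_{k,l}\le\bda_{i,j}$.

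I do not anticipate a genuine obstacle here; the statement is essentially a bookkeeping exercise in the $\max$-formula. The one point requiring a little care is the forward direction's use of the constraint $l\ge u_k+1$ (equivalently $l-1\ge u_k$) to upgrade $l\le\max\{j+(k-i),u_k\}$ to $l\le j+(k-i)$ — this is where the hypothesis $j\in[u_i+1,v_i]$, $l\in[u_k+1,v_k]$ built into membership in $P_\calL$ is actually used, and it should be flagged explicitly. Everything else is routine case-splitting on whether a given $\max$ is achieved by its linear argument or by the $u_t$ term.
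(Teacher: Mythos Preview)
Your proposal is correct and follows essentially the same approach as the paper: both proofs compare coordinates using the explicit $\max$-formula from \Cref{def: join irr}, examining coordinate $t=k$ for the forward direction and splitting into $t<k$ and $t\ge k$ for the converse. Your treatment is in fact slightly more explicit than the paper's, which asserts ``by \Cref{def: join irr}, we must have $k\ge i$'' without spelling out the contradiction at coordinate $k$ that you provide.
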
 
\begin{proof}
    Suppose $\bda_{k,l} \le \bda_{i,j}$. By \Cref{def: join irr}, we must have $k \ge i$
and $[\bda_{k,l}]_t \le [\bda_{i,j}]_t$ for all $t \in [n]$. 
In particular,
\[
u_k < l = [\bda_{k,l}]_k \le [\bda_{i,j}]_k = \max\{j + k-i,\, u_k\},
\]
which implies $l-j\le k-i$.

    Conversely, suppose $k \ge i$ and $l-j\le k-i$.  
For any $t < k$, we have 
$[\bda_{k,l}]_t = u_t \le [\bda_{i,j}]_t$.
For any $t \ge k$, we obtain
\[
[\bda_{k,l}]_t = \max\{\,l + (t-k),\, u_t\} \le  \max\{\,j + (t-i),\, u_t\} = [\bda_{i,j}]_t\]
and hence $\bda_{k,l} \le \bda_{i,j}$.
\end{proof}

For any element of $P_{\calL}$, we determine  which elements it covers.

\begin{Lemma} \label{lem: coverinP}
If $\bda_{i,j}\in P_{\calL}$, then the only elements $\bda_{i,j}$ covers are the elements in the set $$ \{\bda_{i,j-1}, \bda_{i+1,j+1}\},$$ which may be the empty set.
Moreover, 
$\bda_{i,j-1} \in P_{\calL} $ if and only if $ j \ge u_i+2$ and $\bda_{i+1, j+1} \in P_{\calL} $ if and only if $j \ge u_{i+1}$ and $i\neq n$.  
\end{Lemma}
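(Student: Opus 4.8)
The plan is to prove both assertions together by combining the order-theoretic characterization from \Cref{cor: north east} with a direct identification of the covering relations. First I would fix $\bda_{i,j}\in P_\calL$ and, using \Cref{cor: north east}, describe the full down-set $\{\bda_{k,l}\in P_\calL : \bda_{k,l}\le \bda_{i,j}\}$ as the set of pairs $(k,l)$ with $k\ge i$, $u_k+1\le l\le v_k$, and $l-j\le k-i$. From this description I would read off the maximal elements strictly below $\bda_{i,j}$: among all such $(k,l)$ with $(k,l)\ne(i,j)$, the candidates for maximality are exactly those as ``close'' to $\bda_{i,j}$ as possible, which forces either $k=i$ (and then $l\le j-1$, whose largest value is $l=j-1$) or $k=i+1$ (and then $l\le j+1$, whose largest value is $l=j+1$). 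Any element with $k\ge i+2$ sits below $\bda_{i+1,j+1}$ whenever the latter is in $P_\calL$, and otherwise still sits below some element with smaller $k$; I would check this bookkeeping explicitly via \Cref{cor: north east}, which is the one slightly fiddly point.

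Concretely, the argument splits as follows. I would first verify that $\bda_{i,j-1}$ and $\bda_{i+1,j+1}$, when they lie in $P_\calL$, are genuinely covered by $\bda_{i,j}$: membership in $P_\calL$ is just the index constraint $j-1\ge u_i+1$ (equivalently $j\ge u_i+2$) in the first case and $j+1\le v_{i+1}$ together with $j+1\ge u_{i+1}+1$ and $i\ne n$ in the second; that $\bda_{i,j-1}<\bda_{i,j}$ and $\bda_{i+1,j+1}<\bda_{i,j}$ follow immediately from \Cref{cor: north east}. For the covering property, suppose $\bda_{i,j-1}<\bda_{k,l}<\bda_{i,j}$; then \Cref{cor: north east} applied to both inequalities gives $k\ge i$, $l-j<k-i$ (strict, since $\bda_{k,l}\ne\bda_{i,j}$ would otherwise need $l-j=k-i$ to be excluded — I would be careful here and instead argue $l-(j-1)\le k-i$ and $\bda_{k,l}\le\bda_{i,j}$ with $\bda_{k,l}\ne\bda_{i,j}$), and $i\ge k$, $(j-1)-l\le i-k$, forcing $k=i$ and then $j-1\le l\le j-1$, a contradiction. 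The same two-sided squeeze handles $\bda_{i+1,j+1}$.

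Next I would show these are the \emph{only} elements covered: given any $\bda_{k,l}\in P_\calL$ with $\bda_{k,l}<\bda_{i,j}$, I must produce an element of $\{\bda_{i,j-1},\bda_{i+1,j+1}\}\cap P_\calL$ lying weakly between them (or equal to $\bda_{k,l}$). By \Cref{cor: north east}, $k\ge i$ and $l-j\le k-i$. If $k=i$, then $l<j$ (as $\bda_{k,l}\ne\bda_{i,j}$ and $l\le j$ with $l=j$ impossible), so $l\le j-1$; since $l\ge u_i+1$ we get $j-1\ge u_i+1$, hence $\bda_{i,j-1}\in P_\calL$, and $\bda_{k,l}=\bda_{i,l}\le\bda_{i,j-1}$ by \Cref{cor: north east}. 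If $k\ge i+1$, then I would check $\bda_{i+1,j+1}\in P_\calL$: indeed $i+1\le k\le n$ so $i\ne n$; and $l\ge u_k+1$ combined with $l\le j+(k-i)$ and the ladder inequality $u_{i+1}\le u_k-(k-i-1)$ forces $j+1\ge u_{i+1}+1$ — here I would use $u_{k}\ge u_{i+1}+(k-i-1)$; also $j+1\le v_{i+1}$ must be argued, which follows because $\bda_{i,j}\in P_\calL$ gives $[\bda_{i,j}]_{i+1}=\max\{j+1,u_{i+1}\}\le v_{i+1}$ by the remark after \Cref{def: join irr}. Then $\bda_{k,l}\le\bda_{i+1,j+1}$ by \Cref{cor: north east} since $k\ge i+1$ and $l-(j+1)\le (k-i)-1=k-(i+1)$.

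\textbf{Main obstacle.} The routine part is the repeated invocation of \Cref{cor: north east}; the one place demanding care is showing that whenever some $\bda_{k,l}$ with $k\ge i+1$ lies below $\bda_{i,j}$, the specific element $\bda_{i+1,j+1}$ actually belongs to $P_\calL$ (both the lower bound $j+1\ge u_{i+1}+1$ and the upper bound $j+1\le v_{i+1}$), so that it can serve as the intermediate element. This is exactly where the ladder structure $u_1<u_2<\cdots$, $v_1<v_2<\cdots$ and the consequence $[\bda_{i,j}]_{i+1}\le v_{i+1}$ from the remark following \Cref{def: join irr} get used, and it is the step I would write out most carefully.
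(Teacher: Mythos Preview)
Your proposal is correct and follows essentially the same approach as the paper: both arguments hinge on \Cref{cor: north east}, do a case split on whether the first index of a smaller element equals $i$ or exceeds $i$, and in the latter case use the ladder inequalities $u_k\ge u_{i+1}+(k-i-1)$ to force $\bda_{i+1,j+1}\in P_\calL$. The only expository difference is that you plan a separate two-sided squeeze to verify that $\bda_{i,j-1}$ and $\bda_{i+1,j+1}$ are genuinely covered, whereas the paper gets this for free once it shows every $\bda_{g,h}<\bda_{i,j}$ lies below one of the two candidates (so they are maximal in the strict down-set); your extra step is harmless but redundant.
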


\begin{proof}

First notice that since $\bda_{i,j}\in P_{\calL}$, then $i\in [n]$ and $j\in[u_i+1,v_i]$. Hence by \Cref{def: join irr} $\bda_{i,j-1}\in P_{\calL}$ if and only if $j-1\in [u_i+1, v_i]$ and  the latter is equivalent to $j\ge u_{i}+2$, since $j\le v_i$. Similarly, $\bda_{i+1, j+1}\in P_{\calL}$ if and only if $i+1\in [n]$ and $j+1\in [u_{i+1}+1, v_{i+1}]$. Since $i\in [n]$ and $j\le v_i<v_{i+1}$ we can conclude that $\bda_{i+1, j+1}\in P_{\calL}$ if and only if $i\neq n$ and $j\ge u_{i+1}$.
By \Cref{cor: north east}, if $\bda_{i,j-1}\in P_{\calL}$, then $\bda_{i,j-1}<\bda_{i,j}$ and similarly, if $\bda_{i+1, j+1}\in P_{\calL}$, then $\bda_{i+1, j+1}<\bda_{i,j}$.

Now suppose that $\bda_{i,j}$ covers $\bda_{g,h}\in P_{\calL}$. Since $\bda_{g,h}<\bda_{i,j}$, then $g\ge i$ and $h-j\leq g-i$, \Cref{cor: north east}. If $g=i$, then $h< j$,  since $\bda_{g,h}<\bda_{i,j}$. Thus $h \le j-1<j$ and since $\bda_{g,h}, \bda_{i,j}\in P_{\calL}$, then $\bda_{i,j-1}$ exists and $\bda_{g,h}\le \bda_{i,j-1}$, again by \Cref{cor: north east}. Therefore, $\bda_{g,h}=\bda_{i,j-1}$, since $\bda_{i,j}$ covers $\bda_{g,h}$. 

If $g>i$, then we claim that $\bda_{i+1,j+1}$ exists and $\bda_{g,h}= \bda_{i+1,j+1}$. If $\bda_{i+1,j+1}$ does not exist, then $j<u_i+1$ and thus $h<u_i+1+g-i\le u_{g}+1$. This means $\bda_{g,h}$ does not exist, a contradiction to our assumption that $\bda_{i,j}$ covers $\bda_{g,h}\in P_{\calL}$. Thus $\bda_{i+1,j+1}$ exists and since $h-j\le g-i=1+g-(i+1)$, it follows that $\bda_{g,h}\le \bda_{i+1,j+1}$, \Cref{cor: north east}. Hence $\bda_{g,h}=\bda_{i+1,j+1}$.

Therefore, $\bda_{i,j}$ covers an element of $P_{\calL}$ if and only if $\{\bda_{i,j-1}, \bda_{i+1,j+1}\}\neq \emptyset$. Moreover, $\bda_{i,j}$ covers only the join-irreducible elements in $\{\bda_{i,j-1}, \bda_{i+1,j+1}\}$, when the set is not empty. 
\end{proof}

 We remark here that \Cref{lem: coverinP} implies that if neither $\bda_{i,j-1}$ and $\bda_{i+1,j+1}$ exist, then $\bda_{i,j}$ does not cover any elements of $P_{\calL}$.
Our next goal is to understand the connected components of $P_{\Lr}$. We introduce the following notation. 

\begin{Notation}\label{not: disconnected partition}
    Consider the set $$\calC=\{i\in [n-1]: v_i<u_{i+1}\} \cup \{n\}=\{q_1, \ldots, q_{t-1}, q_t=n: q_{i}<q_{i+1} \text{ for all } i\in[n-1]\}.$$ 
Notice that $i\in \calC$ if and only if $u_{i+1}=v_i+1$ for all $i\in [n-1]$ by \Cref{shrinkmatrix}.
    We partition $[n]$ as follows \[[n]=[1, q_{1}]\cup [p_{2}, q_2]\cup \cdots \cup[p_{t}, q_t],\] where $p_{i+1}=q_{i}+1$ for all $i\in[t-1]$. 
    For simplicity let $p_1=1$.
Notice that with these assumptions $\bfX_{\bfS}$ is a block matrix with $t$ blocks $\bfX_i$, where $\bfX_i$ is a ladder matrix in the rows $p_i$ to $q_i$:
    $$\bfX_{\bfS}=\begin{bmatrix}
        \bfX_1 &&&\\
        &\bfX_2&&\\
        &&\ddots&\\
        &&&\bfX_t\\
    \end{bmatrix}.$$
\end{Notation}

We now determine the connected components of $P_{\Lr}$.

\begin{Proposition} \label{components}
 The lattice $P_{\Lr}$ is connected if and only if $r=1$ and $|\calC|=1$.
   In particular, $P_{\Lr}$ has $|\calC|$ connected components when $r=1$ and $|\calC| +1$ connected components when $r> 1$. 
\end{Proposition}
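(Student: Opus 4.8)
The plan is to analyze the covering relations in $P_{\Lr}$ using \Cref{lem: coverinP} together with the extra covering relations coming from the $[r]$-component, and to show that the connected components of $P_{\Lr}$ are exactly indexed by the blocks $\bfX_1, \dots, \bfX_t$ (when $r = 1$) with one additional isolated chain $\{\bda_{n+1,k} : k \in [2,r]\}$ (when $r > 1$). Throughout I view two join-irreducible elements as being in the same connected component of $P_{\Lr}$ if they are joined by a path in the comparability (Hasse) graph of $P_{\Lr}$.

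First I would handle the $r=1$ case, where $P_{\calL} = P_{\Lr}$. The key observation is that \Cref{lem: coverinP} tells us precisely which elements $\bda_{i,j}$ covers: the candidates are $\bda_{i,j-1}$ (present iff $j \ge u_i+2$) and $\bda_{i+1,j+1}$ (present iff $i \ne n$ and $j \ge u_{i+1}$). I claim that within a single block $[p_\ell, q_\ell]$ the join-irreducible elements $\{\bda_{i,j} : p_\ell \le i \le q_\ell,\ j \in [u_i+1,v_i]\}$ form a connected subposet, while no covering relation links elements across two different blocks. For connectedness within a block: fix $\ell$ and note that for each fixed $i$ in the block, the chain $\bda_{i,u_i+1} \lessdot \bda_{i,u_i+2} \lessdot \cdots \lessdot \bda_{i,v_i}$ is connected; to pass from row $i$ to row $i+1$ (still inside the block, so $v_i \ge u_{i+1}$, i.e. $i \notin \calC$), one checks that there is some $j$ with $u_{i+1} \le j \le v_i$, so that $\bda_{i,j}$ covers $\bda_{i+1,j+1}$, linking the two rows. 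For disconnectedness across blocks: if $i \in \calC$ with $i < n$, then $v_i < u_{i+1}$, so for every $j \le v_i$ we have $j < u_{i+1}$, hence $\bda_{i+1,j+1}$ never appears as something covered by a row-$i$ element, and symmetrically no row-$(i+1)$ element $\bda_{i+1,j'}$ can cover a row-$i$ element since $\bda_{i+1,j'-1}$ stays in row $i+1$ and $\bda_{i+2,j'+1}$ stays below; more carefully, using \Cref{cor: north east} one shows no comparability at all crosses a block boundary, because $\bda_{k,l} \le \bda_{i,j}$ forces $l - j \le k - i$, and a direct computation with the block structure shows this cannot simultaneously hold with $\bda_{k,l}$ in block $\ell'$ and $\bda_{i,j}$ in block $\ell$ for $\ell' \ne \ell$. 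This gives exactly $|\calC| = t$ components when $r = 1$, and in particular $P_{\Lr}$ is connected iff $t = 1$, i.e. $|\calC| = 1$.

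Next, for $r > 1$: \Cref{cor: join irr} says $P_{\Lr} = \{(\bda_{i,j},1)\} \cup \{\bda_{n+1,k} : k \in [2,r]\}$. The first family, with its covering relations inside the slice $j_1 = 1$, is an isomorphic copy of $P_{\calL}$, hence splits into $|\calC|$ components as above. The elements $\bda_{n+1,k} = (u_1,\dots,u_n,k)$ form a chain $\bda_{n+1,2} \lessdot \bda_{n+1,3} \lessdot \cdots \lessdot \bda_{n+1,r}$ (these are consecutive in the last coordinate with all other coordinates minimal), so they contribute a single connected piece. It remains to check this chain is not connected to any $(\bda_{i,j},1)$: by the partial order on $\Lr$, an element $\bda_{n+1,k}$ is comparable to $(\bdc,1)$ only if $\bdc = (u_1,\dots,u_n)$ in the last-coordinate comparison forces $\bdc \le (u_1,\dots,u_n)$ or $\ge$, but $(u_1,\dots,u_n)$ is the bottom of $\calL$ and is not join-irreducible, so it is not in $P_{\calL}$; any actual $\bda_{i,j} \in P_{\calL}$ satisfies $[\bda_{i,j}]_i = j > u_i$, so $\bda_{n+1,k} \not\ge (\bda_{i,j},1)$, and $\bda_{n+1,k} \le (\bda_{i,j},1)$ fails since $k \ge 2 > 1$. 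Hence no comparability, so no edge, and we get $|\calC| + 1$ components.

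The main obstacle I anticipate is the disconnectedness-across-blocks argument in the $r=1$ case: one must rule out \emph{all} comparabilities (not merely covering relations) between elements of different blocks, since connectedness of the poset is about the comparability graph. The clean way to do this is via \Cref{cor: north east}: suppose $\bda_{k,l} \le \bda_{i,j}$ with $i$ in block $\ell$ and $k$ in block $\ell'$; then $k \ge i$ forces $\ell' \ge \ell$, and if $\ell' > \ell$ one uses that there is some boundary index $q \in \calC$ with $i \le q < p_{\ell'} \le k$, together with $l - j \le k - i$ and the defining inequalities $l \ge u_k + 1$, $j \le v_i$, and the telescoping gap $u_{p_{\ell'}} = v_q + 1 > v_i$ (iterated across intermediate block boundaries), to derive $l \ge u_k + 1 \ge u_{p_{\ell'}} + (k - p_{\ell'}) + 1 > v_i + (k - p_{\ell'}) \ge j + (k - i)$, contradicting $l \le j + (k-i)$. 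Once this estimate is set up carefully the rest is bookkeeping, and I would present it as a short lemma-style computation rather than grinding through every index.
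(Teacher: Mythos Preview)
Your approach is essentially the same as the paper's: you partition $P_{\Lr}$ into the block pieces $A_s=\{(\bda_{i,j},1):i\in[p_s,q_s]\}$ together with the chain $\{\bda_{n+1,k}:k\in[2,r]\}$, show each piece is internally connected via \Cref{lem: coverinP}, and then use \Cref{cor: north east} to rule out comparabilities between pieces. The paper proceeds identically.

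One inequality in your sketch is written incorrectly, however. In the cross-block incomparability argument you claim
\[
l \;\ge\; u_k+1 \;\ge\; u_{p_{\ell'}}+(k-p_{\ell'})+1 \;>\; v_i+(k-p_{\ell'}) \;\ge\; j+(k-i),
\]
but the last step $v_i+(k-p_{\ell'})\ge j+(k-i)$ is equivalent to $v_i-j\ge p_{\ell'}-i$, which fails in general (take $j=v_i$ and $i$ strictly inside block $\ell$). The repair is to anchor at the \emph{first} boundary after block $\ell$ rather than at $p_{\ell'}$: writing
\[
u_k-v_i \;=\; (u_k-u_{p_{\ell+1}})+(u_{p_{\ell+1}}-v_{q_\ell})+(v_{q_\ell}-v_i)\;\ge\;(k-p_{\ell+1})+1+(q_\ell-i)\;=\;k-i,
\]
one gets $l-j\ge u_k+1-v_i>k-i$, contradicting \Cref{cor: north east}. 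This is exactly the estimate the paper uses, and with this correction your argument goes through unchanged.
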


\begin{proof}
Recall that  $P_{\Lr} = \{(\bda_{i,j},1): i\in[n],  j\in[u_i+1, v_i]\}\cup \{\bda_{n+1,k}: k\in [2,r]\}$, \Cref{cor: join irr}.  Let $\calC=\{i\in [n-1]: v_i<u_{i+1}\} \cup \{n\}=\{q_1, \ldots, q_{t-1}, q_t=n: q_{i}<q_{i+1} \text{ for all } i\in[n-1]\}$ be as in \Cref{not: disconnected partition}. For every $s\in [t]$ let $A_s\coloneq\{(\bda_{i,j},1): i\in [p_s,q_s], j\in[u_i+1,v_i]\}$. 
Let $B=\{\bda_{n+1,k}:k\in[2,r]\}$, which exists if and only if $r\ge 2$. We claim that $A_1, \ldots, A_t, B$ are the connected components of $P_{\Lr}$ when $r\ge 2$. 
In fact, $A_s$ is an isomorphic copy of the poset corresponding to the $s$-th block of $X_{\mathbb{S}}$.

Note that $B\cup \bigcup_{i=1}^t A_i=P_{\Lr}$. 
Observe that $(\bda_{i,j},1)$ is incomparable to $\bda_{n+1,k}$ for any $i\in [n]$, $j\in [u_i+1,v_i]$ and $k\in [2,r]$. Moreover, $B$ is  a connected component of $P_{\Lr}$ since $\bda_{n+1,k}<\bda_{n+1,k+1}$ for all $k\in [2,r-1]$.

 Let $s\in [t]$ and let $i\in [p_s,q_s]$. By \Cref{cor: north east} the set $\{(\bda_{i,j},1): j\in [u_i+1,v_i]\}$ is connected. If $i<q_s$, then  
$v_i\ge u_{i+1}$. Hence $(\bda_{i,v_i},1)>(\bda_{i+1,v_i+1},1)$ by \Cref{lem: coverinP}. Therefore, $A_s$ is connected.

We claim that for any $s, s'\in [t]$ with $s\neq s'$ the elements of $A_s$ and $A_{s'}$ are incomparable. Indeed, let $(\bda_{i,j},1)\in A_s$ and $(\bda_{g,h},1)\in A_{s'}$ and without loss of generality assume $s'>s$. Then $g>i$ and $\bda_{g,h} \not> \bda_{i,j}$. Since 
$$j\le v_i\le v_{q_s}<u_{q_s+1}=u_{p_{s+1}} \le u_g<h,$$ 
it follows that $h-j>u_g-v_i\ge g-p_{s+1}+1+q_s-i=g-i$, as $p_{s+1}=q_s+1$, $u_{k}-u_l\ge k-l$ and $v_k-v_l\ge k-l$ for any $k>l$. Thus,  $\bda_{g,h}\not<\bda_{i,j}$ and hence $ (\bda_{i,j},1), (\bda_{g,h},1)$ are incomparable by \Cref{cor: north east} as claimed. Therefore, $A_1, \ldots, A_t, B$ are the connected components of $P_{\Lr}$. In the case $r=1$, one can show as above that the sets $A'_s=\{\bda_{i,j}: i\in [p_s, q_s] ,j\in [u_i+1,v_i] \}$ are the connected components of $P_{\calL}$ for all $s\in [t]$.
\end{proof}

In our final result of this section we determine the minimal and maximal elements of $P_{\calL}$.

\begin{Proposition}\label{lem:beginning and end}
 The minimal elements of $P_{\calL}$ are $$\{\bda_{i,u_i+1}: \epsilon_{i}>1\}$$  
    and the maximal elements of $P_{\calL}$ are 
    $$\{\bda_{i, v_i}: \theta_{i-1}>1\}.$$
\end{Proposition}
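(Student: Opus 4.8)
The plan is to read the minimal and maximal elements of $P_{\calL}$ directly off the cover relations computed in \Cref{lem: coverinP}. Since $P_{\calL}$ is a finite poset, an element is minimal precisely when it covers no element of $P_{\calL}$, and maximal precisely when no element of $P_{\calL}$ covers it. So everything reduces to \Cref{lem: coverinP} together with the membership criteria established inside its proof, namely: $\bda_{i,j-1}\in P_{\calL}$ iff $j\ge u_i+2$, and $\bda_{i+1,j+1}\in P_{\calL}$ iff $i\neq n$ and $j\ge u_{i+1}$.

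For the minimal elements, fix $\bda_{i,j}\in P_{\calL}$, so $i\in[n]$ and $j\in[u_i+1,v_i]$. By \Cref{lem: coverinP}, the elements covered by $\bda_{i,j}$ are exactly those of $\{\bda_{i,j-1},\bda_{i+1,j+1}\}$ that lie in $P_{\calL}$; hence $\bda_{i,j}$ is minimal iff $j<u_i+2$ and, unless $i=n$, also $j<u_{i+1}$. The first condition forces $j=u_i+1$, and substituting into the second gives $u_{i+1}>u_i+1$, i.e. $\epsilon_i>1$; when $i=n$ there is no second condition, consistent with the convention $\epsilon_n=\infty$. This gives the minimal elements as $\{\bda_{i,u_i+1}:\epsilon_i>1\}$.

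For the maximal elements I first turn \Cref{lem: coverinP} around: if $\bda_{k,l}\in P_{\calL}$ covers $\bda_{i,j}$, then $\bda_{i,j}\in\{\bda_{k,l-1},\bda_{k+1,l+1}\}$, and since the index pair is recovered from the element (the leftmost coordinate of $\bda_{i,j}$ that exceeds the corresponding $u$ sits in position $i$ and equals $j$), this forces $(k,l)=(i,j+1)$ or $(k,l)=(i-1,j-1)$. Thus $\bda_{i,j}$ is maximal iff neither $\bda_{i,j+1}$ nor $\bda_{i-1,j-1}$ belongs to $P_{\calL}$. Exactly as in the proof of \Cref{lem: coverinP} one checks that $\bda_{i,j+1}\in P_{\calL}$ iff $j<v_i$, and that $\bda_{i-1,j-1}\in P_{\calL}$ iff $i\ge 2$, $j\ge u_{i-1}+2$, and $j\le v_{i-1}+1$. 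So maximality forces $j=v_i$; at $j=v_i$ the inequality $j\ge u_{i-1}+2$ holds automatically because $v_i>v_{i-1}>u_{i-1}$, leaving only the requirement that $i=1$ or $v_i>v_{i-1}+1$, that is $\theta_{i-1}>1$ (with $i=1$ absorbed into $\theta_0=\infty$). This yields the maximal elements $\{\bda_{i,v_i}:\theta_{i-1}>1\}$.

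The argument is essentially bookkeeping; the one place needing a moment's care is the maximal case, where one must notice that, among the two inequalities cutting out $\bda_{i-1,j-1}\in P_{\calL}$, only $j\le v_{i-1}+1$ actually obstructs maximality once $j=v_i$, and one must thread the boundary conventions $\epsilon_n=\theta_0=\infty$ so that $\bda_{n,u_n+1}$ and $\bda_{1,v_1}$ come out minimal and maximal, respectively.
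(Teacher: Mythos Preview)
Your proof is correct and follows essentially the same approach as the paper: both reduce the question to \Cref{lem: coverinP}, identify the candidate covers above and below, and analyze when these fail to lie in $P_{\calL}$. You are in fact slightly more careful than the paper in two places---you justify that the index pair $(i,j)$ is determined by $\bda_{i,j}$ when inverting the cover relation, and you explicitly verify that the lower bound $j\ge u_{i-1}+2$ is automatic at $j=v_i$---but these are refinements of the same argument, not a different route.
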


\begin{proof}
By \Cref{lem: coverinP},  $\bda_{i,j} \in P_{\calL}$ covers only the elements $\{ \bda_{i,j-1}, \bda_{i+1, j +1}\}$ in $P_{\calL}$, when they exist.  It follows that $\bda_{i,j}$ is minimal if and only if this set is empty, that is, when
 $j=u_i+1$ and $u_i+1<u_{i+1}$ or $i=n$. The latter is equivalent to $j=u_i+1$ and $\epsilon_i>1$, since $\epsilon_n>1$ by convention. 

Similarly, $\bda_{i,j}$ is maximal if and only if $\{ \bda_{i,j+1}, \bda_{i-1, j -1}\}=\emptyset$.
Note, since $\bda_{i,j} \in P_{\calL}$, then $\bda_{i,j+1}$ does not exist if and only if $j = v_i$. Moreover, $\bda_{i-1,v_i-1}$ does not exist if and only if either $i=1$ or $v_i-1\not\in[u_{i-1}+1, v_{i-1}]$.
The latter is equivalent to either $i=1$ or $v_i-1>v_{i-1}$. 
Therefore $\bda_{i,j}$ is maximal if and only if $j =v_i$ and  $\theta_{i-1} >1$, since by convention $\theta_0>1$. 
\end{proof}

\begin{Remark} \label{M}
One may associate several combinatorial objects with $P_{\calL}$. We first define a two-sided ladder matrix $\bfX_{P_{\calL}}$ of the same size as $\bfX_{\bfS}$ as follows: for any $i\in [n]$ and any $j\in [m]$ the $(i,j)$-th entry of $\bfX_{P_{\calL}}$ is $\bda_{i,j}$ if $j\in [u_i+1, v_i]$ and zero otherwise. The nonzero entries of $\bfX_{P_{\calL}}$ are precisely the join-irreducible elements of $P_{\calL}$. By \Cref{lem: coverinP}, $\bda_{i,j}$ covers only the elements in positions $(i,j-1)$ and $(i+1, j+1)$ provided they are nonzero. Thus the poset structure of $P_{\calL}$ can be identified directly from $\bfX_{P_{\calL}}$, and therefore directly from $\bfX_{\bfS}$. 
Note that the difference between the ladder structures of $\bfX_{\bfS}$ and $\bfX_{P_{\calL}}$ lie only in the $(i, u_i)$ entries; consequently,  $\bfX_{\bfS}$ and $\bfX_{P_{\calL}}$ share the same values of $\epsilon_i$ and $\theta_i$. One can also see that $P_{\calL}$ can be associated to a skew Young diagram. Recently, Gasanova and Nicklasson associated a skew Young diagram to the join-irreducible poset of certain Hibi rings arising from planar lattices to compute the $h$-vectors of these rings \cite{GasanovaNicklasson2024}.
\end{Remark}

\begin{Example}\label{examplePoset}
    Consider the partition $[1,13]=[1,5]\cup[3,7]\cup [4,8]\cup [9,11] \cup [10,13] $. 
    Then the ladder matrix $\bfX_{\bfS}$ is 
\[
\setlength{\arraycolsep}{6pt} 
\begin{array}{c}
\bfX_{\bfS}  \,\,\,\, =
\begin{bmatrix}
x_{1,1} & x_{1,2} & x_{1,3} & x_{1,4} & x_{1,5} & 0 & 0 & 0 & 0 & 0 &0 &0 &0 \\
0 & 0 & x_{2,3} & x_{2,4} & x_{2,5} & x_{2,6} & x_{2,7} & 0 & 0 & 0 &0 &0 &0\\
0 & 0 & 0 & x_{3,4} & x_{3,5} & x_{3,6} & x_{3,7} & x_{3,8} & 0 & 0&0 &0&0\\
0 & 0 & 0 & 0 & 0 & 0& 0 & 0 & x_{4,9} & x_{4,10} & x_{4,11}&0&0\\
0 & 0 & 0 & 0 & 0 & 0 & 0 & 0 & 0 &x_{5,10}& x_{5,11}& x_{5,12} & x_{5,13}  \\
\end{bmatrix}
\end{array}
\]
and its associated join-irreducible ladder matrix $\bfX_{P_{\calL}}$ is given by
\[
\setlength{\arraycolsep}{6.6pt} 
\begin{array}{c}
\bfX_{P_{\calL}} =
\begin{bmatrix}
\text{ } 0 & \bda_{1,2} & \bda_{1,3} & \bda_{1,4} & \bda_{1,5} & 0 & 0 & 0 & 0 &0&0&0&0\\
\text{ } 0 & 0 & 0 & \bda_{2,4} & \bda_{2,5} & \bda_{2,6} & \bda_{2,7} & 0 & 0&0&0&0&0 \\
\text{ } 0 & 0 & 0 &  0 & \bda_{3,5} & \bda_{3,6} & \bda_{3,7} & \bda_{3,8} &0 &0&0&0&0\\
\text{ } 0 & 0 & 0 & 0 & 0 &0 & 0 & 0& 0& \bda_{4,10}& \bda_{4,11}&0&0\\
\text{ } 0 & 0 & 0 & 0 & 0 &0 & 0 & 0 & 0 & & \bda_{5,11} &\bda_{5,12}& \bda_{5,13}
\end{bmatrix}.
\end{array}
\]

We write $\bda_{k,l}\rightarrow \bda_{i,j}$ if and only if $\bda_{k,l}<\bda_{i,j}$. Then the structure of $P_{\calL}$ can be depicted by the diagram    
\begin{figure}[H]
  \[   \xymatrixcolsep{1pc} \xymatrixrowsep{.05pc} P_{\calL}:  \xymatrix{
\bda_{1,2} \ar[r] & \bda_{1,3}  \ar[r]& \bda_{1,4}  \ar[r]& \bda_{1,5}&&\\
&& \bda_{2,4} \ar[r]  \ar[ul] & \bda_{2,5}  \ar[r] \ar[ul] & \bda_{2,6}  \ar[r] \ar[ul] & \bda_{2,7}   & &&&\\
&&& \bda_{3,5}  \ar[r]  \ar[ul] & \bda_{3,6}  \ar[r] \ar[ul] & \bda_{3,7}  \ar[r] \ar[ul] & \bda_{3,8} \ar[ul]  &  &&\\
&&&&&   &  &  & \bda_{4,10}  \ar[r] & \bda_{4,11}  \\
&&&&&   &  &  & & \bda_{5,11} \ar[ul] \ar[r] & \bda_{5,12}  \ar[ul] \ar[r] & \bda_{5,13} 
}
\]
\caption{The poset $P_\calL$ derived directly from matrices $\bfX_{\bfS}$ and $\bfX_{P_\calL}$.}
\end{figure}
The matrix $\bfX_{\bfS}$ has two blocks each one giving rise to a connected component of $P_{\calL}$.
\end{Example}

\section{The Gorenstein property}\label{sec:GR}

In \Cref{sec:Poset} we described the structure of $P_{\Lr}$ explicitly. Our goal in this section is to provide necessary and sufficient conditions for $P_{\Lr}$ to be pure or equivalently for $\calF(M)$ to be Gorenstein.

Recall that  a subset $\calA$ of a poset $P$ is called a
\emph{chain} of $P$ if $\calA$ is a totally ordered subset with respect to the induced order. In other
words, a chain is a subset $\calA = \{a_1,a_2,\ldots, a_k\}$ of $P$ with $a_1 < a_2 <\cdots < a_k$.
The \emph{length} of a chain $\calA$ is $|\calA|-1$. Let $\rank(P)$ denote the rank of $P$, which is the length of the longest chain of $P$. We also recall that for a standard graded algebra $A$ over a field $\KK$, the Hilbert function $H_A(t)$  counts the dimension of the $t$-th graded component of $A$. The Hilbert series is given by  
\[
   H_A(t) \;=\; \frac{h(t)}{(1-t)^d},
\]
where $d$ is the Krull dimension of $A$, and 
\[
   h(t) = h_0 + h_1 t + \cdots + h_s t^s
\]
is a polynomial with integer coefficients. The $h$-vector of $A$ is denoted by  
\[
   h(A) = (h_0, h_1, \ldots, h_s).
\]

We first show that it suffices to work with $\calF(\ini_{\tau}(L))$ which is a Hibi ring, \Cref{FiberLadder}.

\begin{Proposition}\label{r=1tor}
For $r\ge 2$, the following are equivalent:
      \begin{enumerate}[a]
          \item $ \calF(M)$ is Gorenstein;
          \item $\calF(N)$ is Gorenstein;
          \item $\calF(\ini_{\tau}(L))$ is Gorenstein and $\rank (P_{\calL})=r-2$;
          \item $ \calF(L)$ is Gorenstein and $\rank (P_{\calL})=r-2$.
      \end{enumerate}     
\end{Proposition}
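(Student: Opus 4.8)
The plan is to prove (a)$\Leftrightarrow$(b) via the Sagbi degeneration, and then establish the chain (b)$\Leftrightarrow$(c)$\Leftrightarrow$(d) by analyzing the poset $P_{\Lr}$ of join-irreducible elements and invoking Hibi's criterion together with the connected-component decomposition from \Cref{components}.

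\smallskip

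\emph{Step 1: (a)$\Leftrightarrow$(b).} By \Cref{FiberLadder}(a), the natural generators of $M$ form a Sagbi basis for $\calF(M)$ with $\ini_{\tau'}(\calF(M)) = \calF(N)$, and both are Cohen--Macaulay normal domains of the same dimension (\Cref{FiberLadder}(c)). A standard Sagbi-deformation argument shows that the Hilbert function, hence the $h$-vector, of $\calF(M)$ and $\calF(N)$ coincide. Since both rings are Cohen--Macaulay, each is Gorenstein if and only if its $h$-vector is symmetric (by Stanley's criterion); therefore $\calF(M)$ is Gorenstein if and only if $\calF(N)$ is.

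\smallskip

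\emph{Step 2: (b)$\Leftrightarrow$(c).} By \Cref{FiberLadder}(b), $\calF(N)$ is the Hibi ring of the distributive lattice $\Lr$, and by Hibi's theorem it is Gorenstein if and only if the poset $P_{\Lr}$ of join-irreducible elements is pure (all maximal chains have equal length). Now I use \Cref{cor: join irr}: $P_{\Lr} = P_{\calL}\times\{1\} \,\sqcup\, \{\bda_{n+1,k}: k\in[2,r]\}$, and by \Cref{components} the second summand $B = \{\bda_{n+1,k}: k\in[2,r]\}$ is a connected component of $P_{\Lr}$ that is a single chain on $r-1$ elements, hence of length $r-2$. A disjoint union of posets is pure if and only if every connected component is pure \emph{of the same rank}. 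Thus $P_{\Lr}$ is pure if and only if $P_{\calL}$ (which is the disjoint union of the components $A_1,\dots,A_t$) is pure of rank $r-2$. But $\calF(\ini_\tau(L))$ is the Hibi ring of $\calL$ (the $r=1$ case), which is Gorenstein if and only if $P_{\calL}$ is pure; combining, $\calF(N)$ is Gorenstein if and only if $\calF(\ini_\tau(L))$ is Gorenstein and $\rank(P_{\calL}) = r-2$. Here I should note the edge case $r=2$: then $B$ is a single element (rank $0$), and the condition $\rank(P_{\calL})=0$ forces $P_{\calL}$ to be an antichain, consistent with the statement.

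\smallskip

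\emph{Step 3: (c)$\Leftrightarrow$(d).} This is exactly Step 1 applied with $r=1$: $\calF(L)$ and $\calF(\ini_\tau(L)) = \calF(N)|_{r=1}$ have the same $h$-vector by Sagbi degeneration and are both Cohen--Macaulay, so one is Gorenstein if and only if the other is. Appending the identical condition $\rank(P_{\calL})=r-2$ to both sides preserves the equivalence.

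\smallskip

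The main obstacle I anticipate is Step 2, specifically the bookkeeping around purity of disjoint unions and the requirement that all components share the \emph{same} rank — one must be careful that the rank of each $A_s$ must equal $r-2$, not merely that each $A_s$ is internally pure, and the clean way to package this is precisely to say ``$P_{\calL}$ is pure and $\rank(P_{\calL}) = r-2$'' as the statement does. A secondary technical point is verifying that the Sagbi deformation in Steps 1 and 3 genuinely preserves the Hilbert series (this is where \Cref{FiberLadder}(a)'s assertion about the monomial order $\omega$ matching $\ini_\omega(\calK) = \ini_\sigma(\calJ)$ does the work), but this is routine given the cited results.
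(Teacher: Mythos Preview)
Your proposal is correct and follows essentially the same route as the paper: Sagbi degeneration plus Stanley's symmetric $h$-vector criterion for (a)$\Leftrightarrow$(b) and (c)$\Leftrightarrow$(d), and Hibi's purity criterion together with the connected-component decomposition of $P_{\Lr}$ from \Cref{components} for (b)$\Leftrightarrow$(c). The paper's proof is slightly more compressed (it handles (c)$\Leftrightarrow$(d) implicitly rather than as a separate step), but the ingredients and logic are identical.
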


\begin{proof}
Recall that a graded algebra is Gorenstein if and only if it is a Cohen-Macaulay domain and its $h$-vector is symmetric, see \cite[Corollary~4.4.6]{BHCMbook}. 

    By \Cref{FiberLadder} we know that $\ini_{\tau'}(\calF(M))\cong \calF(N)$, where $N=\bigoplus_{i=1}^{r}\ini_{\tau}(L)$.
    Then by \cite[Proposition 2.4]{CHV96}, the Hilbert functions of $\calF(M)$ and $\calF(N)$ coincide and hence  $\calF(M)$ and $\calF(N)$ have the same $h$-vector.   Therefore, $\calF(M)$ is Gorenstein if and only if $\calF(N)$ is Gorenstein, since $\calF(M)$ and $\calF(N)$ are Cohen-Macaulay domains, \Cref{FiberLadder}. 
    Moreover, by \Cref{FiberLadder}, $\calF(N)$ is the Hibi ring of the distributive lattice $\Lr$. Thus,  $\calF(N)$ is Gorenstein if and only if the poset of join-irreducible elements 
    $P_{\calL\times[r]}$ is pure, \cite{HibiDistLatt}. The latter holds if and only if all the maximal chains of $P_{\calL\times[r]}$ have the same length.
    
    Finally, that by the proof of  \Cref{components} the maximal chains of $P_{\calL\times[r]}$ are in one-to-one correspondence with the maximal chains of $P_{\calL}$ along with the chain given by $\{\bda_{n+1, k}\}_{k=2}^{r}$, which has length $r-2$. Now the result follows. 
 \end{proof}

\begin{Remark}
 Notice that, by \Cref{r=1tor}, even if $\calF(L)$ is Gorenstein, the algebra $\calF(M)$ can only be Gorenstein for one value of $r$. On the other hand, for a fixed $r$, the condition that $\calF(M)$ is Gorenstein imposes restrictions on the shape of the ladder matrix, as we will show in \Cref{thm: GorM}.
\end{Remark}

  In light of \Cref{r=1tor}, we can now focus on determining necessary and sufficient conditions for $ \calF(\ini_{\tau}(L))$ to be Gorenstein. In the next result we compute the length of any saturated chain between any two comparable elements in $P_{\calL}$.

  \begin{Lemma} \label{lem: chain length}
    Any saturated chain between two join-irreducible elements $\bda_{i,j}, \bda_{k,l}\in P_{\calL}$  with $\bda_{k,l}< \bda_{i,j}$ has length $2(k-i)+j-l$.
\end{Lemma}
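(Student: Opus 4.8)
The plan is to exploit the explicit covering relation from \Cref{lem: coverinP}: within $P_{\calL}$, an element $\bda_{g,h}$ covers only the elements $\bda_{g,h-1}$ and $\bda_{g+1,h+1}$ (when they exist). So any saturated chain from $\bda_{i,j}$ down to $\bda_{k,l}$ proceeds by a sequence of steps, each of which either decreases the second index by $1$ while fixing the first (a ``horizontal'' step $\bda_{g,h}\gtrdot\bda_{g,h-1}$), or increases the first index by $1$ while increasing the second index by $1$ (a ``diagonal'' step $\bda_{g,h}\gtrdot\bda_{g+1,h+1}$). I would introduce a variable for the number of diagonal steps, say $d$, and a variable for the number of horizontal steps, say $s$, in a given saturated chain from $\bda_{i,j}$ to $\bda_{k,l}$. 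Tracking the first coordinate: each diagonal step raises it by $1$, each horizontal step leaves it fixed, so $d=k-i$. Tracking the second coordinate: each diagonal step raises it by $1$, each horizontal step lowers it by $1$; going from $j$ down to $l$ means $d-s=l-j$, hence $s=d-(l-j)=(k-i)-(l-j)$. The length of the chain is the total number of steps $d+s=2(k-i)+j-l$, which is exactly the claimed formula.

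To turn this into a clean argument I would proceed by induction on $k-i$ (or equivalently on the length of the chain). For the base case $k=i$: by \Cref{cor: north east} the only elements of $P_{\calL}$ below $\bda_{i,j}$ with first index $i$ are $\bda_{i,l}$ with $l<j$, and by \Cref{lem: coverinP} a saturated chain among these must step down one unit at a time in the second index, so it has length $j-l=2(k-i)+j-l$. For the inductive step, consider a saturated chain $\bda_{i,j}=\bda_{g_0,h_0}\gtrdot\bda_{g_1,h_1}\gtrdot\cdots\gtrdot\bda_{g_N,h_N}=\bda_{k,l}$ with $k>i$. Since $g_N=k>i=g_0$, the first coordinate must strictly increase somewhere; let the first diagonal step be at position $p$, so $\bda_{g_0,h_0}=\bda_{i,j}$, the steps from position $0$ to $p-1$ are all horizontal (giving $h_{p-1}=j-(p-1)$ and $g_{p-1}=i$), and step $p$ is diagonal, so $\bda_{g_p,h_p}=\bda_{i+1,\,j-(p-1)+1}=\bda_{i+1,\,j-p+2}$. (Here I should note $p\ge 1$ is forced: a chain starting with a diagonal step is the case $p=1$.) Then the remaining tail from $\bda_{i+1,j-p+2}$ down to $\bda_{k,l}$ is a saturated chain whose first-index difference is $k-(i+1)$, so by the induction hypothesis it has length $2(k-i-1)+(j-p+2)-l$. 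Adding the $p$ steps already taken gives total length $p+2(k-i-1)+(j-p+2)-l=2(k-i)+j-l$, completing the induction.

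There are a couple of routine sanity checks I would fold in rather than belabor: first, I must confirm that whenever the chain still needs to increase its first coordinate, the diagonal successor $\bda_{g+1,h+1}$ actually exists in $P_{\calL}$ at the moment it is used — but this is automatic, because the chain is given to exist and reaches $\bda_{k,l}$ with $k>g$, and \Cref{lem: coverinP} tells us the \emph{only} way to lower the first coordinate toward a smaller-indexed element is via that diagonal cover; I would phrase the induction so that existence is inherited from the hypothesis that the saturated chain is given. Second, I should observe that the formula is symmetric in the sense that it does not depend on \emph{which} saturated chain is chosen, which is the real content of the lemma (purity-type statements later will use this); this drops out of the induction automatically since the recursion determines the length from the endpoints alone. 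I expect the main obstacle, such as it is, to be purely bookkeeping: cleanly handling the decomposition ``horizontal steps, then first diagonal step, then recurse'' and making sure the indices in $\bda_{i+1,j-p+2}$ and the arithmetic $p+2(k-i-1)+(j-p+2)-l$ are transcribed without slips. No deep idea is needed beyond \Cref{lem: coverinP} and \Cref{cor: north east}.
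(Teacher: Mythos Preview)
Your proposal is correct, and your first paragraph is essentially the paper's own proof: the paper also invokes \Cref{lem: coverinP} to see that each covering step changes the index pair by either $(1,1)$ or $(0,-1)$, then solves $(k,l)=(i,j)+a(1,1)+b(0,-1)$ to get $a=k-i$, $b=(k-i)+(j-l)$, and length $a+b=2(k-i)+j-l$. The induction you propose in the second paragraph is sound but unnecessary---the direct step-counting already determines the length from the endpoints alone, so you can stop after the first paragraph.
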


\begin{proof} Let $\bda_{i,j}, \bda_{k,l}\in P_{\calL}$ such that $\bda_{k,l}< \bda_{i,j}$.
Then  by \Cref{cor: north east}, $k\ge i$. Notice that by \Cref{lem: coverinP}, we have $\bda_{i+1, j+1}<\bda_{i,j}$ and $\bda_{i,j-1}<\bda_{i,j}$ as long as $\bda_{i+1, j+1}$ and $\bda_{i,j-1}$ exist. Therefore, the difference in the index between two consecutive elements in a saturated chain is either $(1,1)$ or $(0,-1)$.
Since $\bda_{k,l}<\bda_{i,j}$ we must have \[(k,l) = (i,j)+a(1,1) + b(0,-1) \]  for some $a,b \in \mathbb{N}$.
The unique solution is $a = (k-i)$ and $b= (k-i) + (j-l)$. Thus the length of any such saturated chain from $\bda_{k,l}$ to $\bda_{i,j}$ is $a+b = 2(k-i)+ (j-l).$  \end{proof}

In order to determine the structure of saturated chains in $P_{\calL}$ we introduce the notion of consecutive minima and maxima.

\begin{Definition}
    Let $\bda_{i,u_i+1}, \bda_{k, u_k+1}$ be two minimal elements of $P_{\calL}$ as in \Cref{lem:beginning and end}. We say that $\bda_{i,u_i+1}$ and $\bda_{k,u_k+1}$ with $i<k$ are \emph{consecutive minima} there is no other minimal element $\bda_{t,u_t+1}$ of $P_{\calL}$ with $i<t<k$. In particular, if $i<k$ and $\bda_{i,u_i+1}$, $\bda_{k,u_k+1}$ are consecutive minima, then for all $s\in [i+1,k-1]$ we have $\epsilon_{s}=1$. Similarly, we can define \emph{consecutive maxima}.
\end{Definition}

With the definition of consecutive minima and maxima we are now able to show how maximal chains are related in $P_{\calL}$.

\begin{Lemma}\label{lem:connected}
    Suppose  $v_{i} \ge u_{i+1}$ for all $i \in [n-1]$. Then for any two consecutive minimal elements in $P_{\calL}$ there exist saturated chains to the same maximal element of $P_{\calL}$.   
    Similarly,  for any two consecutive maximal elements in $P_{\calL}$ there exist saturated  chains to the same minimal element of $P_{\calL}$.
\end{Lemma}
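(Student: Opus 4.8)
The plan is to prove both halves by the same two-step recipe. First I will exhibit two explicit saturated chains, one starting at each of the two given minimal (resp. maximal) elements, that meet at a common vertex of $P_\calL$. Then I will extend that common vertex along an arbitrary saturated chain up to a maximal (resp. down to a minimal) element of $P_\calL$ and concatenate; since concatenating two saturated chains along a shared endpoint is again saturated, and since a saturated chain in the finite poset $P_\calL$ whose top cannot be raised must end at a maximal element (dually for the bottom), this produces saturated chains from the two starting elements to one common maximal (resp.\ minimal) element.

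For the first statement, let $\bda_{i,u_i+1}$ and $\bda_{k,u_k+1}$ with $i<k$ be consecutive minima. By \Cref{lem:beginning and end} together with the definition of consecutive minima, $\epsilon_i>1$, $\epsilon_k>1$ (or $k=n$), and $\epsilon_s=1$ for all $s\in[i+1,k-1]$; telescoping gives $u_p=u_{i+1}+(p-i-1)$ for every $p\in[i+1,k]$. I then check, using \Cref{lem: coverinP}, that
\[
\bda_{k,u_k+1}\lessdot\bda_{k-1,u_k}\lessdot\bda_{k-2,u_k-1}\lessdot\cdots\lessdot\bda_{i,u_{i+1}}
\]
is a saturated chain whose $j$-th term is $\bda_{k-j,\,u_k+1-j}$: for $j<k-i$ this term equals $\bda_{k-j,u_{k-j}+1}\in P_\calL$, and for $j=k-i$ it equals $\bda_{i,u_{i+1}}$, which lies in $P_\calL$ precisely because of the hypothesis $v_i\ge u_{i+1}$; moreover each consecutive pair realizes the covering $\bda_{a,b}\gtrdot\bda_{a+1,b+1}$ of \Cref{lem: coverinP} (the relevant index inequalities hold thanks to the equalities $\epsilon_s=1$). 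Similarly the horizontal chain $\bda_{i,u_i+1}\lessdot\bda_{i,u_i+2}\lessdot\cdots\lessdot\bda_{i,u_{i+1}}$ is saturated and lies in $P_\calL$, again since $u_{i+1}\le v_i$. Both chains end at the common vertex $\bda_{i,u_{i+1}}$, so extending it upward to a maximal element $\bda_{a,v_a}$ and concatenating finishes this case.

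The second statement is the mirror image. Given consecutive maxima $\bda_{i,v_i}$ and $\bda_{k,v_k}$ with $i<k$, \Cref{lem:beginning and end} yields $\theta_{i-1}>1$, $\theta_{k-1}>1$, and $\theta_s=1$ for $s\in[i,k-2]$, hence $v_p=v_i+(p-i)$ for $p\in[i,k-1]$. I verify that the diagonal chain $\bda_{i,v_i}\gtrdot\bda_{i+1,v_i+1}\gtrdot\cdots\gtrdot\bda_{k,v_{k-1}+1}$, whose $j$-th term $\bda_{i+j,v_i+j}$ equals $\bda_{i+j,v_{i+j}}$ for $j\le k-1-i$, and the horizontal chain $\bda_{k,v_k}\gtrdot\bda_{k,v_k-1}\gtrdot\cdots\gtrdot\bda_{k,v_{k-1}+1}$ are both saturated and contained in $P_\calL$, using \Cref{lem: coverinP} for the coverings and the instance $v_{k-1}\ge u_k$ of the hypothesis to see that $\bda_{k,v_{k-1}+1}\in P_\calL$. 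They meet at $\bda_{k,v_{k-1}+1}$; extending this vertex downward to a minimal element and concatenating gives the desired chains. The only real work in the whole argument is the index bookkeeping confirming that each intermediate term lies in $P_\calL$ and that consecutive terms are genuine covers — this is where the equalities $\epsilon_s=1$ (resp.\ $\theta_s=1$) on the interior indices and the connectivity hypothesis $v_i\ge u_{i+1}$ enter — and I do not anticipate any conceptual obstacle beyond that.
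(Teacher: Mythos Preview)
Your proof is correct. Both you and the paper follow the same strategy—find a single element of $P_\calL$ lying above both consecutive minima (resp.\ below both consecutive maxima), then extend to a maximal (resp.\ minimal) element—but you execute it differently. The paper simply invokes the comparison criterion of \Cref{cor: north east}: it observes that $\bda_{i,u_i+1}\le\bda_{i,v_i}$ trivially, and that the telescoping identity $u_k=u_{i+1}+(k-i-1)$ together with $u_{i+1}\le v_i$ gives $u_k+1\le v_i+(k-i)$, so $\bda_{k,u_k+1}<\bda_{i,v_i}$ as well; the existence of saturated chains then follows for free from finiteness of the poset. You instead build the two saturated chains by hand using the cover description of \Cref{lem: coverinP}, meeting at the lower common point $\bda_{i,u_{i+1}}$ (dually $\bda_{k,v_{k-1}+1}$) rather than the paper's $\bda_{i,v_i}$ (dually $\bda_{k,u_k+1}$). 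Your route is more explicit and makes the role of the equalities $\epsilon_s=1$ (resp.\ $\theta_s=1$) on interior indices completely transparent, at the cost of more index bookkeeping; the paper's route is shorter because \Cref{cor: north east} absorbs that bookkeeping into a single inequality check.
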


\begin{proof}
Let $\bda_{i,u_i+1}, \bda_{k, u_k+1}\in P_{\calL}$ be two consecutive minimal elements. Without loss of generality we may assume $i<k$ and that for all $s\in [i+1,k-1]$ we have $\epsilon_{s}=1$. 

Notice first that $\bda_{i, u_i+1}\le \bda_{i,v_i}$, by \Cref{cor: north east}. Moreover, since $\epsilon_s=1$ for all $s\in [i+1,k-1]$ we have that $u_{k}=u_{i+1}+(k-(i+1))=u_{i+1}+(k-i)-1$. Hence, $u_k+1\le v_i+k-i$, since $u_{i+1}\le v_i$. Therefore, $\bda_{k, u_k+1}<\bda_{i,v_i}$, by \Cref{cor: north east}. Finally, since $\bda_{i,v_i}$ is larger than both $\bda_{i,u_i+1}$, $\bda_{k, u_k+1}$, then there exists a maximum that is the common end of a chain from each one of the minimal elements. Symmetrically, for two consecutive maximal elements $\bda_{i,v_i}, \bda_{k, v_k}\in P_{\calL}$ one can show that $\bda_{k,u_k+1}\le \bda_{i,v_i}$ and $\bda_{k,u_k+1}\le \bda_{k, v_k}$. 
\end{proof}

We now give necessary and sufficient conditions for $\calF(L)$ to be Gorenstein, provided that $P_{\calL}$ is connected. Recall that $P_{\calL}$ is connected if and only if $|\calC|=1$, 
or equivalently, $v_{i} \ge u_{i+1}$ for all $i \in [n-1]$, \Cref{components}. 

\begin{Proposition} \label{Thm: Gor1}
 Suppose  $v_{i} \ge u_{i+1}$ for all $i \in [n-1]$. Then 
    $\calF(L)$ is Gorenstein if and only if $\bfX_{\bfS}$ satisfies the following conditions:
    \begin{enumerate}[a]
\item $u_k-u_i = 2(k-i)$
whenever $\epsilon_i, \epsilon_k >1$;

\item $v_k-v_i = 2(k-i)$ whenever  $\theta_{i-1}, \theta_{k-1} >1$. 
    \end{enumerate}
\end{Proposition}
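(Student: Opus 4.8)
The plan is to convert the Gorenstein property into a combinatorial purity statement about $P_\calL$ and then extract conditions (a) and (b) from the chain-length formula of \Cref{lem: chain length}. First I would run the argument from the proof of \Cref{r=1tor} in the case $r=1$: by \Cref{FiberLadder} we have $\ini_{\tau'}(\calF(L))\cong\calF(\ini_\tau(L))$, both algebras are Cohen--Macaulay domains with the same Hilbert series, and $\calF(\ini_\tau(L))$ is the Hibi ring of the distributive lattice $\calL$; hence $\calF(L)$ is Gorenstein if and only if $\calF(\ini_\tau(L))$ is, and by Hibi's criterion \cite{HibiDistLatt} this happens if and only if the join-irreducible poset $P_\calL$ is pure, i.e.\ all its maximal chains have the same length. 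Since the hypothesis $v_i\ge u_{i+1}$ forces $|\calC|=1$, \Cref{components} gives that $P_\calL$ is connected, and I may assume $P_\calL$ has at least two elements (otherwise everything is trivial), so no element of $P_\calL$ is simultaneously minimal and maximal. By \Cref{lem:beginning and end} the minimal elements of $P_\calL$ are the $\bda_{i,u_i+1}$ with $\epsilon_i>1$ and the maximal ones are the $\bda_{i,v_i}$ with $\theta_{i-1}>1$, and \Cref{lem: chain length} gives that every maximal chain from $\bda_{a,u_a+1}$ up to $\bda_{b,v_b}$ has length
\[
\ell(a,b)=2(a-b)+\bigl(v_b-u_a-1\bigr)=(2a-u_a)+(v_b-2b)-1 .
\]
Thus the whole problem reduces to: $\ell(a,b)$ is constant over all comparable minimal/maximal pairs if and only if (a) and (b) hold.

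For the direction ``(a) and (b) $\Rightarrow$ Gorenstein'', I would observe that (a) makes $2a-u_a$ independent of the minimal row $a$ and (b) makes $v_b-2b$ independent of the maximal row $b$; substituting these into the displayed formula shows $\ell(a,b)$ is a single constant, so $P_\calL$ is pure.

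For the converse I would assume $P_\calL$ pure with common maximal-chain length $c$, and first prove the identity $u_k-u_i=2(k-i)$ only for \emph{consecutive} minima $\bda_{i,u_i+1},\bda_{k,u_k+1}$ (with $i<k$), the general case then following by telescoping. The crucial input here is \Cref{lem:connected}, which produces a maximal element $M=\bda_{b,v_b}$ lying above $\bda_{i,v_i}$, hence strictly above both consecutive minima (strictness holding because two distinct minimal elements are incomparable); then the chains through $M$ are genuine maximal chains, so $\ell(i,b)=\ell(k,b)=c$, and subtracting gives $u_k-u_i=2(k-i)$. Condition (b) follows symmetrically from the second half of \Cref{lem:connected}, which yields a common minimal element strictly below two consecutive maxima, giving $\ell(a,i)=\ell(a,k)=c$ and hence $v_k-v_i=2(k-i)$.

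I expect the only real friction to be in the converse: one must check that the element supplied by \Cref{lem:connected} is genuinely maximal (resp.\ minimal) in $P_\calL$ and that it lies \emph{strictly} above (resp.\ below) the two extremes, so that the chains through it are honest maximal chains of $P_\calL$ to which \Cref{lem: chain length} applies, and the degenerate possibility of a minimal element coinciding with a maximal element has to be excluded using connectedness. The telescoping that upgrades the relations in (a) and (b) from consecutive rows to arbitrary pairs, and the whole forward direction, are routine.
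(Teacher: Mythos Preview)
Your proposal is correct and follows essentially the same route as the paper: reduce to purity of $P_\calL$ via \textsc{Sagbi} deformation and Hibi's criterion, use \Cref{lem:beginning and end} to identify the extremal elements, use \Cref{lem: chain length} for the chain-length formula, and for the converse direction use \Cref{lem:connected} to link consecutive minima (resp.\ maxima) through a common maximum (resp.\ minimum), then telescope. Your forward direction is phrased a bit more cleanly than the paper's---separating $\ell(a,b)=(2a-u_a)+(v_b-2b)-1$ into two pieces that (a) and (b) individually make constant---while the paper instead equates the two endpoints of an arbitrary pair of maximal chains directly; but this is only a cosmetic difference.
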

\begin{proof}
Notice that by \Cref{lem:beginning and end} whenever $\epsilon_i, \epsilon_k >1$, then $\bda_{i,u_i+1}, \bda_{k,u_k+1}$ are minimal elements in $P_{\calL}$. Similarly, $\bda_{i,v_i}, \bda_{k,v_k}$ are maximal elements of $P_{\calL}$ whenever $\theta_{i-1}, \theta_{k-1}>1$.

 Suppose first that $\calF(L)$ is Gorenstein. Then 
 by the proof of \Cref{r=1tor} $\calF(\ini_{\tau}(L)$ is Gorenstein and thus $P_{\calL}$ is pure, \cite{HibiDistLatt}. Hence all the maximal chains in $P_{\calL}$ have the same length.
 Let  $\bda_{i, u_i+1}, \bda_{k,u_k+1}\in P_\calL$  be any two consecutive minimal elements. Then by \Cref{lem:connected}, there exists a maximal element $\bda_{h,v_h}$ with $\bda_{i, u_i+1}\le \bda_{h,v_h}$ and $ \bda_{k,u_k+1}\le \bda_{h,v_h}$. Without loss of generality we may assume that  $h\le i<k$. Then by \Cref{lem: chain length}, we have $$2(i-h)+v_h-(u_i+1)=2(k-h)+v_h-(u_k+1)$$ or equivalently $(u_k-u_i) = 2(k-i)$. Similarly, using \Cref{lem:connected}, for any two consecutive maximal elements $\bda_{i, v_i}, \bda_{k,v_k}\in P_\calL$  we have $(v_k-v_i) = 2(k-i)$.
 
 Now for any pair of  minima $\bda_{i, u_i+1}, \bda_{k,u_k+1}\in P_\calL$ we let $j_1,\ldots, j_s$ be the indices of  minimal elements between $i$ and $k$. Then we have $$(u_k-u_i) =u_k-u_{j_1}+u_{j_1}-u_{j_2}+\cdots+u_{j_s}-u_i=2[(k-j_1)+(j_1-j_2)+\cdots+(j_s-i)]=2(k-i).$$ Similarly, $(v_k-v_i) = 2(k-i)$ for every pair of maxima $\bda_{i, v_i}, \bda_{k,v_k}\in P_\calL$.
 
For the converse, suppose now that conditions (a) and (b) hold. Consider two maximal chains  from $\bda_{i,u_{i+1}}$ to $\bda_{g,v_g}$ and  from $\bda_{k,u_k+1}$ to $\bda_{h,v_h}$.  The lengths of these chains are equal when $2(i-g)+v_g-u_i-1=2(k-h)+v_h-u_k-1$, \Cref{lem: chain length}. This is equivalent to $$2(i-k)+v_g-v_h=2(g-h)+u_i-u_k,$$ which holds by conditions (a) and (b).
\end{proof}

\begin{Remark}\label{rem:sparse}
\Cref{Thm: Gor1} extends the result in \cite[Corollary~4.8]{CDFGLPS}, as we provide necessary and sufficient conditions for $\calF(L)$ to be Gorenstein, when $\bfX_{\bfS}$ is a sparse $2\times n$ matrix as a sparse matrix can be rearranged into a ladder matrix as shown in \cite[Corollary~4.8]{CDFGLPS}. 
If $\bfX$ is a generic $n \times m$ matrix, then $\calF(I_n(\bfX))$ is Gorenstein by the fact that 
\[
    \ini_{\tau}\!\left(I_n(\bfX_{\bfS})\right) = \ini_{\tau}\!\left(I_n(\bfX)\right),
\]
where $\bfX_{\bfS}$ is an $n\times m$ ladder matrix with $\epsilon_i=\theta_j=1$ for all $i\in [n-1], j\in [n]$ and $\epsilon_n, \theta_0>1$. Note since  $m>n$, we have $v_{i} \ge u_{i+1}$ for all $i \in [n-1]$.
By the proof of \Cref{r=1tor}, the algebra $\calF(I_n(\bfX))$ is Gorenstein if and only if $\calF(\ini_{\tau}(I_n(\bfX)))$ is Gorenstein.  
By the same reasoning, $\calF(\ini_{\tau}(I_n(\bfX)))$ is Gorenstein if and only if $\calF(I_n(\bfX_{\bfS}))$ is Gorenstein.  
The claim now follows by \Cref{Thm: Gor1} as the required assumptions are vacuous in the generic case.
\end{Remark}

As seen in the proof of \Cref{Thm: Gor1}, when $P_{\mathcal{L}}$ is connected, to prove  that $\calF(L)$ is Gorenstein it suffices to verify conditions (a) and (b) for consecutive minima and maxima, respectively. 

\begin{Corollary}\label{thm: Gor1ForEpsilon}
        Suppose $v_{i}\ge u_{i+1}$ for all $i\in [n-1]$.  Then 
    $\calF(L)$ is Gorenstein if and only if the matrix $\bfX_{\bfS}$ satisfies the following conditions:
    \begin{enumerate}[a]
        \item For any $i\in [n-1]$ if $\epsilon_i>1$, then $\epsilon_j=1$ for all $j\in [i+1, \epsilon_i-2]$ and $\epsilon_{i+\epsilon_i-1}>1$;

        \item For any $i\in [n-1]$, if $\theta_i>1$, then  $\theta_{j}=1$ for all $j\in [i-(\theta_i-2), i-1]$ and $\theta_{i-(\theta_i-1)}>1$.
    \end{enumerate}
\end{Corollary}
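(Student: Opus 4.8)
The plan is to derive conditions (a) and (b) of \Cref{thm: Gor1ForEpsilon} as a reformulation of conditions (a) and (b) of \Cref{Thm: Gor1}, now phrased purely in terms of consecutive minima (resp.\ maxima). By the observation following \Cref{Thm: Gor1}, it suffices to check the equations $u_k - u_i = 2(k-i)$ for consecutive minima and $v_k - v_i = 2(k-i)$ for consecutive maxima; the general case then follows by telescoping, exactly as in the proof of \Cref{Thm: Gor1}. So the whole statement reduces to translating ``$u_k - u_i = 2(k-i)$ whenever $\bda_{i,u_i+1}$ and $\bda_{k,u_k+1}$ are consecutive minima'' into the $\epsilon$-language, and symmetrically for the $\theta$-language.

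First I would fix $i \in [n-1]$ with $\epsilon_i > 1$, so that $\bda_{i,u_i+1}$ is a minimal element of $P_{\calL}$. Let $k$ be the smallest index with $k > i$ and $\epsilon_k > 1$ (such a $k$ exists since $\epsilon_n = \infty > 1$ by convention), so $\bda_{i,u_i+1}$ and $\bda_{k,u_k+1}$ are consecutive minima and $\epsilon_j = 1$ for all $j \in [i+1, k-1]$. Since $\epsilon_j = u_{j+1} - u_j$, the condition $\epsilon_j = 1$ for $j \in [i+1,k-1]$ gives $u_k - u_{i+1} = k - (i+1)$, i.e. $u_k = u_{i+1} + (k-i-1)$. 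On the other hand, $u_{i+1} - u_i = \epsilon_i$, so $u_k - u_i = \epsilon_i + (k-i-1)$. The Gorenstein condition $u_k - u_i = 2(k-i)$ then becomes $\epsilon_i + (k-i-1) = 2(k-i)$, that is, $k - i = \epsilon_i - 1$, equivalently $k = i + \epsilon_i - 1$. Thus condition (a) of \Cref{Thm: Gor1} for this pair is equivalent to: the next index $k > i$ with $\epsilon_k > 1$ is exactly $k = i + \epsilon_i - 1$, which unwinds precisely to ``$\epsilon_j = 1$ for all $j \in [i+1, i+\epsilon_i - 2]$ and $\epsilon_{i+\epsilon_i-1} > 1$.'' (I would flag that the index range in the statement, written $[i+1,\epsilon_i-2]$, should read $[i+1, i+\epsilon_i-2]$; I would either use the corrected range or note the typo.) I would then run the mirror-image argument for $\theta$: fixing $i$ with $\theta_i > 1$ and letting $k < i$ be the largest index with $\theta_k > 1$, the relations $\theta_j = v_{j+1} - v_j = 1$ for $j \in [k+1, i-1]$ together with $v_{k+1} - v_k = \theta_k$ and condition (b) of \Cref{Thm: Gor1} force $i - k = \theta_i - 1$; the subscript bookkeeping here differs slightly because of the $\theta$-index convention (recall $\theta_j$ compares rows $j$ and $j+1$, whereas a maximum at row $i$ is governed by $\theta_{i-1}$), so I would carry out this translation carefully.

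The only genuine content beyond \Cref{Thm: Gor1} is this index translation, together with the reduction to consecutive minima/maxima already noted. For completeness I would also spell out the converse direction: given conditions (a) and (b) of the present corollary, walking along the sequence of minimal rows shows that for any consecutive minima $\bda_{i,u_i+1}, \bda_{k,u_k+1}$ one has $k - i = \epsilon_i - 1$ and hence $u_k - u_i = \epsilon_i + (k - i - 1) = 2(k-i)$, so condition (a) of \Cref{Thm: Gor1} holds, and symmetrically for (b); then \Cref{Thm: Gor1} gives that $\calF(L)$ is Gorenstein.

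The main obstacle I anticipate is purely bookkeeping: keeping the $\epsilon$- and $\theta$-index shifts straight (minima at row $i$ correspond to $\epsilon_i$, maxima at row $i$ correspond to $\theta_{i-1}$), handling the boundary conventions $\epsilon_n = \theta_0 = \infty$ so that the ``last'' consecutive minimum and the ``first'' consecutive maximum are treated uniformly, and confirming that the range $[i+1, i+\epsilon_i-2]$ is empty exactly when $\epsilon_i = 2$ (so the statement correctly says ``$\epsilon_{i+1} > 1$'' in that case). None of these steps is deep, but I would be careful to state them precisely so the reformulation is airtight.
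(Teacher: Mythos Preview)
Your proposal is correct and follows essentially the same approach as the paper: both reduce to consecutive minima/maxima via the proof of \Cref{Thm: Gor1}, compute $u_k-u_i=\epsilon_i+(k-i-1)$ for consecutive minima, and solve $2(k-i)=\epsilon_i+(k-i-1)$ to get $k=i+\epsilon_i-1$, then run the symmetric $\theta$-argument. You also correctly flag the typo in the index range (it should be $[i+1,\,i+\epsilon_i-2]$, not $[i+1,\,\epsilon_i-2]$).
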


\begin{proof}
By the proof of \Cref{Thm: Gor1}, we have that $\calF(L)$ is Gorenstein if and only if $u_k-u_i=2(k-i)$ and $v_j-v_h=2(j-h)$, whenever $\bda_{i,u_i+1}, \bda_{k,u_k+1}$ and $\bda_{h,v_h}, \bda_{j,v_j}$ are consecutive minima and maxima, respectively. Without loss of generality we will assume $i<k$ and $h<j$.  Since $u_k - u_{i} = \sum_{s=i}^{k-1} \epsilon_s$, it follows that $\bda_{i,u_i+1}, \bda_{k,u_k+1}$ are consecutive minima if and only if $u_k-u_i=\epsilon_i+k-1-i$. Similarly, $\bda_{h,v_h}, \bda_{j,v_j}$ are consecutive maxima if and only if $v_j-v_h=\theta_{j-1}+j-h-1$. 

 One can see that for $i\in [n-1]$ condition (a) holds if and only if  $\bda_{i,u_i+1}, \bda_{k,u_k+1}$ are consecutive minima with $k=i+\epsilon_i-1$. This is equivalent to $u_k-u_i=\epsilon_i+k-1-i=2(k-i)$.

Similarly,  $j-1\in [0,n-1]$ satisfies condition (b) if and only if $\bda_{h,v_h}, \bda_{j,v_j}$ are consecutive maxima, where $h=j-\theta_{j-1}+1$. As above, this is equivalent to  $v_j-v_h=2(j-h)$.
\end{proof}

 \begin{Example}\label{ex:Gorenstein}
     Consider the ladder matrix $\bfX_{\bfS}$ given by the partition \[[1,15]=[1,8]\cup[4,9]\cup [5,10]\cup [7,14]\cup [9,15].\] We have $\epsilon_1=3$, $\epsilon_2=1$, $\epsilon_3=\epsilon_4=2$, $\theta_1=\theta_2=\theta_4=1$, and $\theta_3=4$. Hence by \Cref{thm: Gor1ForEpsilon}, $\calF(L)$ is Gorenstein. Note that verifying this property for this example with a computer algebra program is computationally expensive as the ideal $L$ has $1769$ generators. 
 \end{Example}

\begin{Remark}\label{rem: block matrix}
    Recall that by \Cref{components} the connected components of $P_{\calL}$ are determined by the set $\calC=\{i\in [n-1]: v_i<u_{i+1}\} \cup \{n\}=\{q_1<\cdots <q_{t-1}<q_t=n\}$. Moreover, $\bfX_{\bfS}$ is a block matrix with $t$ blocks $\bfX_i$, where $\bfX_i$ is a ladder matrix in the rows $p_i$ to $q_i$, where $[n]=\bigsqcup_{i=1}^{t}[p_i,q_i]$ and $p_{i+1}=q_{i}+1$ for all $i\in[t-1]$ and $p_1=1$.

    For each $s\in [t]$ we let $i_s\coloneq\min\{i\in [p_s,q_s]: \epsilon_i>1\}$. Note that $i_s$ is well defined since $\epsilon_{q_s}=u_{p_{s+1}}-u_{q_s} \ge u_{p_{s+1}}-v_{q_s}>0$.  Moreover, for $j=p_s$, we have $\theta_{j-1}=v_{j}-v_{j-1}\ge u_{j}-v_{j-1}>0$, since $j-1=q_{s-1}$.  Therefore, for every $s\in [t]$ it follows that $\bda_{i_s, u_{i_s}+1}$ is a minimal element in $P_{\calL}$ and $\bda_{p_s, v_{p_s}}$ is a maximal element in $P_{\calL}$, by \Cref{lem:beginning and end}.
\end{Remark}

\begin{Theorem}\label{thm: GorL}
       Adopt \Cref{not: disconnected partition}. Then 
    $\calF(L)$ is Gorenstein if and only if the matrix $\bfX_{\bfS}$ satisfies the following conditions: 
    \begin{enumerate}[a]
    \item For every $s\in [t]$,
    \begin{enumerate} [i]
    \item $(u_k-u_i) = 2(k-i)$ whenever $\epsilon_i, \epsilon_k>1$ with $i, k\in [p_s, q_s]$;
    \item $(v_k-v_i) = 2(k-i)$  whenever  $\theta_{i-1}, \theta_{k-1} >1$ with $i, k\in [p_s, q_s]$.
    \end{enumerate}

   \item    For any $s\in [t]$ we have $\Delta_{p_s}+i_s-p_s=\Delta_1+i_1-1$, where $i_s=\min\{i\in [p_s,q_s]: \epsilon_i>1\}$.
\end{enumerate}
\end{Theorem}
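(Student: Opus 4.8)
The plan is to reduce everything to the purity of the join-irreducible poset $P_\calL$ and then analyze it block by block. By the proof of \Cref{r=1tor}, $\calF(L)$ is Gorenstein if and only if the Hibi ring $\calF(\ini_{\tau}(L))$ is Gorenstein, which by Hibi's criterion \cite{HibiDistLatt} is equivalent to $P_\calL$ being pure. By \Cref{components} (and its proof) the poset $P_\calL$ is the disjoint union of the subposets $A'_s=\{\bda_{i,j}: i\in[p_s,q_s],\ j\in[u_i+1,v_i]\}$, $s\in[t]$, each of which is isomorphic to the join-irreducible poset of the block $\bfX_s$; moreover elements lying in different components are incomparable, so every maximal chain of $P_\calL$ is contained in a single $A'_s$. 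Hence $P_\calL$ is pure if and only if each $A'_s$ is pure and $\rank(A'_1)=\cdots=\rank(A'_t)$. I would then show that ``each $A'_s$ is pure'' is exactly condition (a), and ``all the $\rank(A'_s)$ coincide'' is exactly condition (b).

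For the first equivalence, fix $s\in[t]$. Within the block we have $v_i\ge u_{i+1}$ for all $i\in[p_s,q_s-1]$ by \Cref{not: disconnected partition}, so one can invoke \Cref{Thm: Gor1} for the block matrix $\bfX_s$: its proof in fact establishes that, under the hypothesis $v_i\ge u_{i+1}$ for all relevant $i$, purity of the associated join-irreducible poset is equivalent to conditions (a),(b) of that proposition. Applied to $\bfX_s$, this gives that $A'_s$ is pure if and only if $u_k-u_i=2(k-i)$ whenever $\epsilon_i,\epsilon_k>1$ with $i,k\in[p_s,q_s]$, and $v_k-v_i=2(k-i)$ whenever $\theta_{i-1},\theta_{k-1}>1$ with $i,k\in[p_s,q_s]$. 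Ranging over all $s$, this is precisely condition (a).

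For the second equivalence I would assume all $A'_s$ are pure and compute $\rank(A'_s)$. Put $i_s=\min\{i\in[p_s,q_s]:\epsilon_i>1\}$; by \Cref{rem: block matrix} the element $\bda_{i_s,u_{i_s}+1}$ is minimal and $\bda_{p_s,v_{p_s}}$ is maximal in $P_\calL$, and both lie in $A'_s$. Minimality of $i_s$ yields $\epsilon_j=1$ for all $j\in[p_s,i_s-1]$, hence $u_{i_s}=u_{p_s}+(i_s-p_s)$, so
\[(u_{i_s}+1)-v_{p_s}=(i_s-p_s)+1-\Delta_{p_s}\le i_s-p_s\]
since $\Delta_{p_s}\ge 1$; by \Cref{cor: north east} this gives $\bda_{i_s,u_{i_s}+1}\le \bda_{p_s,v_{p_s}}$. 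Any saturated chain joining these two elements is then a maximal chain of $A'_s$, and by \Cref{lem: chain length} it has length $2(i_s-p_s)+v_{p_s}-(u_{i_s}+1)=\Delta_{p_s}+i_s-p_s-1$. Purity of $A'_s$ forces $\rank(A'_s)=\Delta_{p_s}+i_s-p_s-1$. Since $p_1=1$, the equalities $\rank(A'_s)=\rank(A'_1)$ for all $s$ become $\Delta_{p_s}+i_s-p_s-1=\Delta_1+i_1-2$, i.e.\ $\Delta_{p_s}+i_s-p_s=\Delta_1+i_1-1$, which is condition (b). Combining the two equivalences proves the theorem.

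I expect the rank computation to be the main obstacle: one has to single out the correct maximal chain of $A'_s$ — the one running from the minimal join-irreducible of smallest row index $\bda_{i_s,u_{i_s}+1}$ to the maximal join-irreducible $\bda_{p_s,v_{p_s}}$ — check that these are comparable via \Cref{cor: north east}, and then evaluate its length using the identity $u_{i_s}=u_{p_s}+(i_s-p_s)$ (forced by minimality of $i_s$) together with \Cref{lem: chain length}. Everything else amounts to bookkeeping with the combinatorial description of $P_\calL$ assembled in \Cref{sec:Poset} and with \Cref{Thm: Gor1}.
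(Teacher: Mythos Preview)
Your proposal is correct and follows essentially the same approach as the paper: reduce to purity of $P_\calL$ via Hibi's criterion, use the block decomposition from \Cref{components} so that purity splits into ``each $A'_s$ pure'' (handled by \Cref{Thm: Gor1}, yielding condition~(a)) together with ``all $\rank(A'_s)$ equal'' (yielding condition~(b)), and compute $\rank(A'_s)$ from the maximal chain between $\bda_{i_s,u_{i_s}+1}$ and $\bda_{p_s,v_{p_s}}$ via \Cref{lem: chain length}. The only cosmetic difference is that the paper cites \Cref{lem: coverinP} for the existence of this chain while you verify comparability directly via \Cref{cor: north east}; both work.
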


\begin{proof}
Let $s\in [t]$ and notice that for $i_s$ as in \Cref{rem: block matrix} we have that $\bda_{i_s,u_{i_s}+1}$ is a minimum and $\bda_{p_s, v_{p_s}}$ is a maximum. By \Cref{lem: coverinP}, there is a maximal chain in $P_{\calL}$ from $\bda_{i_s,u_{i_s}+1}$ to $\bda_{p_s,v_{p_s}}$. By \Cref{lem: chain length} this chain has length $2(i_s-p_s)+v_{p_s}-u_{p_s}-1=\Delta_{p_s}+i_s-p_s-1$, since $u_{i_s}=u_{p_s}+(i_s-p_s)$.

Now $\calF(\ini_{\tau}(L))$ is Gorenstein if and only if the lengths of all maximal chains are equal. The lengths of the maximal chains in each connected component are equal if and only if condition (a) holds, by \Cref{Thm: Gor1} and its proof. Moreover, in each connected component the common length of a maximal chain  is $\Delta_{p_s}+i_s-p_s-1$ and thus these are all equal if and only if condition (b) holds. 
\end{proof}

\begin{Theorem}\label{thm: GorM}
       Adopt \Cref{not: disconnected partition} and suppose $r\ge 2$. Then 
    $\calF(M)$ is Gorenstein if and only if for every $s\in [t]$ the matrix $\bfX_{\bfS}$ satisfies the following conditions: 
    \begin{enumerate}[a]
\item 
 $(u_k-u_i) = 2(k-i)$ whenever $\epsilon_i, \epsilon_k>1$ with $i, k\in [p_s, q_s]$;
\item 
$(v_k-v_i) = 2(k-i)$  whenever  $\theta_{i-1}, \theta_{k-1} >1$ with $i, k\in [p_s, q_s]$.
\item $r=\Delta_{p_s}+i_s
-p_s+1$, where $i_s=\min\{i\in [p_s,q_s]: \epsilon_i>1 \}$.
\end{enumerate}
\end{Theorem}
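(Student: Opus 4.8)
The plan is to deduce this directly from \Cref{r=1tor} and \Cref{thm: GorL}, with no new combinatorics needed beyond one chain-length computation already carried out in the proof of \Cref{thm: GorL}. By part (d) of \Cref{r=1tor}, for $r \ge 2$ the algebra $\calF(M)$ is Gorenstein if and only if $\calF(L)$ is Gorenstein and $\rank(P_{\calL}) = r - 2$. Conditions (a) and (b) of the present statement together amount precisely to condition (a) of \Cref{thm: GorL} (its sub-item (i) being our (a), its sub-item (ii) being our (b)), so the whole argument reduces to showing that, once conditions (a) and (b) are assumed, the conjunction ``$\calF(L)$ is Gorenstein and $\rank(P_{\calL}) = r - 2$'' is equivalent to condition (c).

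The single computational ingredient I would quote from the proof of \Cref{thm: GorL} is the block-length formula. Assuming condition (a) of \Cref{thm: GorL} (equivalently, our (a) and (b)), each connected component $A_s'$ of $P_{\calL}$ is pure, and inside it every maximal chain has the common length $\Delta_{p_s} + i_s - p_s - 1$: this is witnessed by the saturated chain from the minimal element $\bda_{i_s, u_{i_s}+1}$ up to the maximal element $\bda_{p_s, v_{p_s}}$, whose length is computed via \Cref{lem: coverinP} and \Cref{lem: chain length}, using $u_{i_s} = u_{p_s} + (i_s - p_s)$ and that $i_s = \min\{i \in [p_s, q_s] : \epsilon_i > 1\}$ as in \Cref{rem: block matrix}. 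Consequently $\rank(P_{\calL}) = \max_{s \in [t]} \bigl(\Delta_{p_s} + i_s - p_s - 1\bigr)$, and $\calF(L)$ is Gorenstein, i.e.\ $P_{\calL}$ is pure, exactly when all these block-lengths coincide, which is condition (b) of \Cref{thm: GorL}.

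For the forward direction: if $\calF(M)$ is Gorenstein, then $\calF(L)$ is Gorenstein by \Cref{r=1tor}, so \Cref{thm: GorL} gives our (a) and (b) as well as condition (b) of \Cref{thm: GorL}, and moreover $\rank(P_{\calL}) = r - 2$. Purity forces the common chain length to be $\Delta_{p_s} + i_s - p_s - 1$ for every $s$, whence $r - 2 = \Delta_{p_s} + i_s - p_s - 1$, i.e.\ $r = \Delta_{p_s} + i_s - p_s + 1$ for all $s$, which is (c). Conversely, assume (a), (b), (c). From (c), the quantity $\Delta_{p_s} + i_s - p_s$ equals the fixed number $r - 1$ for every $s$; in particular all these quantities agree with one another and with $\Delta_1 + i_1 - 1$ (the instance $s = 1$, using $p_1 = 1$), so condition (b) of \Cref{thm: GorL} holds. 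Combined with our (a) and (b), \Cref{thm: GorL} then yields that $\calF(L)$ is Gorenstein, and the block-length formula gives $\rank(P_{\calL}) = \Delta_{p_s} + i_s - p_s - 1 = r - 2$. A final application of \Cref{r=1tor} shows $\calF(M)$ is Gorenstein.

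There is essentially no obstacle here beyond bookkeeping on top of the earlier results; the one point worth isolating is that condition (c) automatically subsumes condition (b) of \Cref{thm: GorL}, since requiring $\Delta_{p_s} + i_s - p_s + 1$ to equal the single prescribed value $r$ for all $s$ forces these numbers to be equal across all blocks. (One also notes in passing that (c) forces $r = \Delta_{p_s} + i_s - p_s + 1 \ge 2$ automatically, as $\Delta_{p_s} \ge 1$ and $i_s \ge p_s$, which is consistent with the standing hypothesis $r \ge 2$.)
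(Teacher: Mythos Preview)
Your proposal is correct and follows essentially the same approach as the paper's own proof, which also reduces everything to \Cref{r=1tor} together with \Cref{thm: GorL} and its proof (in particular the block-length formula $\Delta_{p_s}+i_s-p_s-1$). Your write-up is simply more explicit than the paper's terse two-sentence argument, and you spell out clearly the one point worth noting: that condition~(c) automatically enforces condition~(b) of \Cref{thm: GorL}.
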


\begin{proof}
By \Cref{r=1tor}, $\calF(M)$ is Gorenstein if and only if $\calF(\ini_{\tau}(L))$ is Gorenstein and all the maximal chains have length $\rank P_{\calL}=r-2$. The conclusion now follows by \Cref{thm: GorL} and its proof.
\end{proof}

\begin{Corollary}
    Let $\bfX_{\bfS}$ be an $n\times m$ ladder matrix with $\epsilon_i=\theta_i=1$ for all $i\in [n-1]$.  Then $\calF(M)$ is Gorenstein if and only if $r=m$. In particular, if $\bfX$ is a generic $n\times m$ matrix, then $\calF(\oplus_{i=1}^rI_n(\bfX))$ is Gorenstein if and only if $r=m$.
\end{Corollary}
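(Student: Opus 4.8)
The statement is a direct specialization of \Cref{thm: GorM} (and \Cref{thm: GorL}) to the case of a ladder matrix with all $\epsilon_i = \theta_i = 1$ for $i \in [n-1]$. First I would observe that under this hypothesis, together with the convention $\epsilon_n = \theta_0 = \infty$, the only indices with $\epsilon_i > 1$ is $i = n$, and the only index with $\theta_{i-1} > 1$ is $i = 1$. Consequently, the ladder matrix $\bfX_{\bfS}$ consists of a single block, i.e.\ $t = 1$, $p_1 = 1$, $q_1 = n$; indeed $v_i \ge u_{i+1}$ for all $i \in [n-1]$ since $\epsilon_i = \theta_i = 1$ forces $u_{i+1} = u_i + 1 \le v_i$ (using $u_i < v_i$ from \Cref{shrinkmatrix}). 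Moreover $P_{\calL}$ is connected by \Cref{components}.

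Next I would check that conditions (a) and (b) of \Cref{thm: GorM} are vacuous: the hypotheses ``$\epsilon_i, \epsilon_k > 1$ with $i, k \in [p_s, q_s] = [n]$'' can only be met when $i = k = n$, in which case the equation $u_k - u_i = 2(k-i)$ reads $0 = 0$; similarly for (b), the only pair satisfying the hypothesis is $i = k = 1$. So only condition (c) carries content. With $t = 1$ we have $s = 1$, $p_1 = 1$, and $i_1 = \min\{i \in [1,n] : \epsilon_i > 1\} = n$, so condition (c) becomes
\[
r = \Delta_{p_1} + i_1 - p_1 + 1 = \Delta_1 + n - 1 + 1 = \Delta_1 + n = (v_1 - u_1) + n = v_1 + n - 1.
\]
Since $\epsilon_i = 1$ for $i \in [n-1]$ forces $u_i = i$, and $\theta_i = 1$ for $i \in [n-1]$ forces $v_n - v_1 = n-1$, together with $v_n = m$ we get $v_1 = m - n + 1$, hence $r = (m - n + 1) + n - 1 = m$. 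Thus \Cref{thm: GorM} says $\calF(M)$ is Gorenstein if and only if $r = m$. For the edge case $r = 1$ (so that \Cref{thm: GorM} does not directly apply, and $m = r = 1$ would be forced, contradicting $n < m$), one notes $r = 1 < m$ can never give a Gorenstein $\calF(M)$ here unless $n = m$, which is excluded; more precisely when $r = 1$ one invokes \Cref{thm: GorL} whose condition (b) is vacuous for $t=1$ but whose underlying requirement $\rank(P_{\calL}) = r - 2 = -1$ cannot hold since $P_{\calL}$ is nonempty (as $n < m$ forces some $\Delta_i \ge 1$), so $\calF(L)$ is never Gorenstein, consistent with $1 = r \ne m$.

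For the ``in particular'' clause, I would invoke the observation already made in \Cref{rem:sparse}: if $\bfX$ is a generic $n \times m$ matrix then $\ini_\tau(I_n(\bfX)) = \ini_\tau(I_n(\bfX_{\bfS}))$ where $\bfX_{\bfS}$ is the $n \times m$ ladder matrix with $\epsilon_i = \theta_i = 1$ for $i \in [n-1]$ (and $\epsilon_n, \theta_0 = \infty$ by convention). Passing to direct sums, $N = \bigoplus_{i=1}^r \ini_\tau(I_n(\bfX)) = \bigoplus_{i=1}^r \ini_\tau(I_n(\bfX_{\bfS}))$, so by \Cref{FiberLadder} and \Cref{r=1tor} the algebra $\calF(\bigoplus_{i=1}^r I_n(\bfX))$ is Gorenstein iff $\calF(N)$ is Gorenstein iff $\calF(\bigoplus_{i=1}^r I_n(\bfX_{\bfS}))$ is Gorenstein, and the first part of the corollary then gives the equivalence with $r = m$. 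The only mild subtlety — the part I would be most careful about — is correctly bookkeeping the conventions $\epsilon_n = \theta_0 = \infty$ so that $i_1 = n$ (rather than being undefined) and confirming $u_i = i$, $v_i = m - n + i$ explicitly, since the whole numerical identity $r = m$ hinges on these.
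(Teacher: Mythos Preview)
Your main argument for $r\ge 2$ is correct and follows exactly the paper's approach: reduce to \Cref{thm: GorM}, note that conditions (a) and (b) are vacuous since the only index with $\epsilon_i>1$ is $i=n$ and the only index with $\theta_{i-1}>1$ is $i=1$, and then compute condition (c) to obtain $r=\Delta_1+n=m$. The paper's proof is terser but identical in substance; it also handles the generic case via $\ini_\tau(I_n(\bfX))=\ini_\tau(I_n(\bfX_{\bfS}))$ just as you do.

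Your edge case $r=1$, however, contains a genuine error. You invoke \Cref{thm: GorL} and then attach to it an ``underlying requirement $\rank(P_{\calL})=r-2=-1$'', concluding that $\calF(L)$ is never Gorenstein. But \Cref{thm: GorL} has no such rank condition; the condition $\rank(P_{\calL})=r-2$ comes from \Cref{r=1tor}, which is stated only for $r\ge 2$. In fact, for this ladder matrix both conditions of \Cref{thm: GorL} are vacuous (condition (b) there reads $\Delta_1+i_1-1=\Delta_1+i_1-1$ since $t=1$), so $\calF(L)$ \emph{is} Gorenstein --- consistent with \Cref{rem:sparse}. Thus the biconditional of the corollary actually fails for $r=1$ (since $1<m$), and the paper's proof simply does not treat that case: it invokes only \Cref{thm: GorM}, so the corollary carries an implicit hypothesis $r\ge 2$ inherited from that theorem. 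You should drop the $r=1$ paragraph rather than try to force the conclusion.
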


\begin{proof}
   Notice first that since $\epsilon_i=\theta_i=1$ for all $i\in [n-1]$, then $\Delta_1=m-n$. Moreover, $\bda_{n,n+1}$ is the only minimal element in $P_{\calL}$  and $\bda_{1,m-n+1}$ is the only maximal element of $P_{\calL}$. The conclusion for the first assertion follows by \Cref{thm: GorM} by noting that $i_1=n$.

Finally, notice that $\ini_{\tau}(L)=\ini_{\tau} (I_n(\bfX_{\bfS}))=\ini_{\tau}(I_n(\bfX))$. The second statement now follows by a similar argument as in \Cref{rem:sparse}.
\end{proof}

In the case $\calF(M)$ is Gorenstein, we can immediately compute the regularity and $a$-invariant of $\calF(M)$ as well as the reduction number $r(M)$ of $M$.

\begin{Corollary}\label{cor: reg} Suppose that $\KK$ is an infinite field.
 \begin{enumerate}[a]
     \item  If $\calF(M)$ is Gorenstein, then $\reg (\calF(M))=r(M)=\sum_{i=1}^{n}\Delta_i$ and $a(\calF(M))=-r$. 

 \item  If $\calF(L)$ is Gorenstein, then  $\reg (\calF(L))=r(L)=\sum_{i=2}^{n}\Delta_i-i+1$ and $a(\calF(L))=-\Delta_1-i$, where  $i=\min\{j: \epsilon_j>1\}$. 
  \end{enumerate}
\end{Corollary}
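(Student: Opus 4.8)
The plan is to reduce both parts to one computation: since $\calF(M)$ is Cohen--Macaulay and, by \Cref{FiberLadder}, shares a Hilbert series with the Hibi ring $\calF(N)$ of the distributive lattice $\calL\times[r]$, all of $\dim$, $a$, $\reg$, and the reduction number are governed by the cardinality and the rank of the join-irreducible poset; the Gorenstein hypothesis then pins down that rank, via \Cref{r=1tor} for part (a) and the proof of \Cref{thm: GorL} for part (b).

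I would first record the general facts. For the Hibi ring of a finite distributive lattice $J(P)$ one has $\dim=|P|+1$ and the numerator of the Hilbert series has degree $|P|-\rank(P)-1$ (the longest chain of $P$ has $\rank(P)+1$ elements), hence its $a$-invariant is $\bigl(|P|-\rank(P)-1\bigr)-\bigl(|P|+1\bigr)=-(\rank(P)+2)$; being Cohen--Macaulay, it also satisfies $\reg=a+\dim=|P|-\rank(P)-1$. Since $\calF(M)$ and $\calF(N)$ have equal Hilbert series and both are Cohen--Macaulay (\Cref{FiberLadder}), the same values of $\dim$, $a$, and $\reg$ hold for $\calF(M)$. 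Finally, because $\KK$ is infinite and $\calF(M)$ is Cohen--Macaulay, $\calF(M)$ is free over the polynomial subring generated by the image of any minimal reduction of $M$, with free generators whose degree generating function is exactly the Hilbert-series numerator of $\calF(M)$; hence $r(M)$ equals the degree of that numerator, so $r(M)=\reg(\calF(M))$, and likewise $r(L)=\reg(\calF(L))$. I would cite \cite{HibiDistLatt} and \cite{BHCMbook} for the Hibi-ring and Cohen--Macaulay facts, and \cite{CFGLLLM} for the combinatorial formulas for the invariants of $\calF(M)$.

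With this in hand, the corollary is a substitution. By \Cref{prop:join-irrL} and \Cref{cor: join irr}, $|P_\calL|=\sum_{k=1}^n\Delta_k$ and $|P_{\calL\times[r]}|=\sum_{k=1}^n\Delta_k+(r-1)$. For (a) (here necessarily $r\ge 2$; when $r=1$ the relevant statement is part (b)): \Cref{r=1tor} gives $\rank(P_\calL)=r-2$ when $\calF(M)$ is Gorenstein, and since $P_{\calL\times[r]}$ is $P_\calL$ together with the chain $\{\bda_{n+1,k}\}_{k=2}^r$ of length $r-2$, we get $\rank(P_{\calL\times[r]})=r-2$; hence $a(\calF(M))=-r$, $\dim\calF(M)=\sum_{k=1}^n\Delta_k+r$, and $\reg(\calF(M))=r(M)=\bigl(\sum_{k=1}^n\Delta_k+r-1\bigr)-(r-2)-1=\sum_{k=1}^n\Delta_k$. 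For (b) ($r=1$, so $P=P_\calL$): the proof of \Cref{thm: GorL} shows that in the Gorenstein case every maximal chain of $P_\calL$ has length $\Delta_1+i-2$, where $i=\min\{j:\epsilon_j>1\}$, so $\rank(P_\calL)=\Delta_1+i-2$; hence $a(\calF(L))=-(\Delta_1+i)$, $\dim\calF(L)=\sum_{k=1}^n\Delta_k+1$, and $\reg(\calF(L))=r(L)=\sum_{k=1}^n\Delta_k-(\Delta_1+i-2)-1=\sum_{k=2}^n\Delta_k-i+1$.

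Everything beyond the list of facts is routine, so the real work is in assembling that list. I expect the main obstacle to be a clean justification of $r(M)=\reg(\calF(M))$ for a Cohen--Macaulay special fiber ring over an infinite field; absent a direct reference for modules, I would argue from scratch, choosing $\dim\calF(M)$ general degree-one elements of $\calF(M)$ (a regular sequence by Cohen--Macaulayness whose preimages in $M$ generate a minimal reduction), and identifying $r(M)$ with the top nonzero degree of the resulting Artinian quotient, which equals the degree of the $h$-polynomial and therefore $\reg(\calF(M))$.
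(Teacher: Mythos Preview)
Your proposal is correct and follows essentially the same approach as the paper: both reduce to the formula $\reg=|P|-\rank(P)-1$ for the underlying Hibi ring, read off $|P_{\calL\times[r]}|=\sum_k\Delta_k+r-1$ from \Cref{cor: join irr}, and extract $\rank(P_{\calL\times[r]})=r-2$ (resp.\ $\rank(P_\calL)=\Delta_1+i-2$) from the Gorenstein hypothesis via \Cref{r=1tor} and the proof of \Cref{thm: GorL}. The only difference is packaging: the paper cites \cite{HerHiOh} and \cite{CFGLLLM} directly for $\reg(\calF(M))=r(M)=|P_{\calL\times[r]}|-\rank(P_{\calL\times[r]})-1$ and $a(\calF(M))=\reg-\ell(M)$, whereas you rederive these from Hibi-ring Hilbert-series facts and a hands-on minimal-reduction argument for $r(M)=\reg$.
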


\begin{proof}
By \cite[Theorem 6.42]{HerHiOh}, \cite[Corollary 2.14, 4.24]{CFGLLLM}, and \cite[Proposition~2.6]{CFGLLLM}, we have  
\begin{align*}
\reg (\calF(M))&=r(M)=|P_{\calL\times [r]}|-\rank(P_{\calL \times [r]})-1 \quad \mbox{ and}\\
a(\calF(M))&=\reg (\calF(M))-\ell(M)
\end{align*}
By \cite[Corollary~4.24]{CFGLLLM} we have $|P_{\Lr}|=\sum_{i=1}^{n}\Delta_i+r-1$. If $r\ge 2$, then $\rank(P_{\calL \times [r]})=r-2$, by \Cref{thm: GorM}. If $r=1$, then $\rank(P_{\calL})=\Delta_1+i-2$, where  $i=\min\{j: \epsilon_j>1\}$ and the formulas for the regularity of $\calF(M)$ and the reduction number of $M$ follow. Finally, one can calculate the $a$-invariant using \cite[Theorem~3.4]{CFGLLLM}. 
\end{proof}

As another application of Proposition \ref{r=1tor} and Theorem \ref{thm: Gor1ForEpsilon}, we obtain the $F$-regularity of $\calF(M)$ in the case where this special fiber is Gorenstein. Recall that, $F$-regularity is a property of rings related to their behavior under the Frobenius morphism, and it plays a crucial role in understanding singularities in algebraic geometry. In general, $F$-regularity does not deform \cite{Si99}, but this holds when the ring is Gorenstein \cite[Corollary~4.7(c)]{HH94}; that is, if a ring $A$ is Gorenstein and $A / wA$ is $F$-regular for some nonzero divisor $w \in A$, then $A$ is $F$-regular.

\begin{Corollary}\label{FRegular}
  If $\bfX_{\bfS}$ satisfies the assumptions as in \Cref{thm: GorM},  then $\calF(M)$ is $F$-regular.
\end{Corollary}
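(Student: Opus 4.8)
The plan is to combine the \textsc{Sagbi} degeneration of \Cref{FiberLadder} with the deformation result \cite[Corollary~4.7(c)]{HH94} recalled above. By \Cref{thm: GorM} the algebra $\calF(M)$ is Gorenstein, and hence by \Cref{r=1tor} so is $\calF(N)$. By \Cref{FiberLadder} the natural generators of $M$ form a \textsc{Sagbi} basis for $\calF(M)$ with $\ini_{\tau'}(\calF(M))=\calF(N)$; the associated \textsc{Sagbi} degeneration produces a $\ZZ$-graded $\KK[t]$-algebra $\calA$, flat over $\KK[t]$, with $\calA/t\calA\cong\calF(N)$, with $\calA/(t-c)\calA\cong\calF(M)$ for every nonzero $c\in\KK$, and with $\calA[t^{-1}]\cong\calF(M)[t,t^{-1}]$ (cf.\ \cite{CHV96}). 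In particular $t$ is a nonzerodivisor on $\calA$.

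First I would record that $\calF(N)$ is $F$-regular: by \Cref{FiberLadder} it is the Hibi ring of the distributive lattice $\calL\times[r]$, hence a normal affine semigroup ring (it is an $\calF(N)$-module direct summand of a Laurent polynomial ring via the monomial grading), and normal affine semigroup rings are strongly $F$-regular; in characteristic zero one works with the reductions mod $p$, which are again normal semigroup rings, so $\calF(N)$ has strongly $F$-regular type. Next I would show that $\calA$ is Gorenstein. For a prime $\mathfrak q$ of $\calA$ containing $t$, the ring $\calA_{\mathfrak q}$ is flat over the discrete valuation ring $\KK[t]_{(t)}$ with closed fiber $\calA_{\mathfrak q}/t\calA_{\mathfrak q}$ a localization of $\calF(N)$, hence Gorenstein, so $\calA_{\mathfrak q}$ is Gorenstein by the local criterion; for a prime $\mathfrak q$ not containing $t$, $\calA_{\mathfrak q}$ is a localization of $\calA[t^{-1}]\cong\calF(M)[t,t^{-1}]$, which is Gorenstein because $\calF(M)$ is. Thus $\calA$ is Gorenstein, and \cite[Corollary~4.7(c)]{HH94} applies with $w=t$: since $\calA$ is Gorenstein, $t$ is a nonzerodivisor, and $\calA/t\calA\cong\calF(N)$ is $F$-regular, the ring $\calA$ is $F$-regular.

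It remains to descend to $\calF(M)$. Localizing the $F$-regular ring $\calA$ at $t$ shows $\calA[t^{-1}]\cong\calF(M)[t,t^{-1}]$ is $F$-regular, and the $\calF(M)$-algebra map $\calF(M)[t,t^{-1}]\twoheadrightarrow\calF(M)$ sending $t\mapsto 1$ splits the inclusion, so $\calF(M)$ is an $\calF(M)$-module direct summand of $\calF(M)[t,t^{-1}]$; since $F$-regularity passes to direct summands (using that for the Gorenstein rings $\calA$ and $\calF(M)$ the weak and strong notions of $F$-regularity coincide, and strong $F$-regularity localizes and descends to direct summands), $\calF(M)$ is $F$-regular. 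The step I expect to be the main obstacle is the bookkeeping around the precise formulations of $F$-regularity — weak versus strong, and graded-local versus global — together with establishing that $\calA$ is Gorenstein at all of its primes rather than only along the special fiber $t=0$; both become routine once one uses that $\calF(M)$ and $\calF(N)$ are Gorenstein, but they are the points requiring care.
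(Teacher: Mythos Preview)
Your argument is correct in outline, and the technicalities you flag (local versus global statements of \cite[Corollary~4.7(c)]{HH94}, and checking Gorensteinness of the total family $\calA$) can indeed be handled by working at the homogeneous maximal ideal of the positively graded degeneration algebra and invoking Lyubeznik--Smith to globalize. However, the paper takes a much shorter route that avoids the \textsc{Sagbi} family entirely. The key input you are not using is that $\calF(M)$ is already known to be $F$-rational in positive characteristic (and to have rational singularities in characteristic zero) by \cite[Corollary~5.7]{LinShenLadder}; this is precisely the result recorded in \Cref{FiberLadder}. Once $\calF(M)$ is $F$-rational and, by \Cref{thm: GorM}, Gorenstein, the paper simply invokes \cite[Corollary~4.7(a)]{HH94}, which says that for Gorenstein rings $F$-rationality and $F$-regularity coincide. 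In effect, the deformation work you are redoing has already been carried out inside \cite{LinShenLadder} to obtain $F$-rationality, and the paper's proof just cashes that in. Your approach has the virtue of being self-contained and of making explicit why the Gorenstein hypothesis is essential (it is what allows the deformation of $F$-regularity), but the paper's two-line argument is what the preamble about \cite[Corollary~4.7(c)]{HH94} was perhaps misleadingly setting up.
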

\begin{proof}
    By \cite [Corollary 5.7]{LinShenLadder} the algebra $\calF(M)$ has rational singularities if $\mathrm{Char} \KK =0$ and is $F$-rational if $\mathrm{Char} \KK >0$. According to \cite[Corollary 4.7 (a)]{HH94} $F$-rationality and $F$-regularity are equivalent for Gorenstein rings. The conclusion follows by \Cref{thm: GorM}.
\end{proof}

Recall that by \cite[Theorem~4.15]{LinShenLadder} $\calF(L)$ is always $F$-rational. The next example shows that $\calF(L)$ can be $F$-regular even without being Gorenstein.

\begin{Example}\label{ex: F-regular}
    Consider the matrix  
$$\bfX =\begin{bmatrix} 
x_{1,1}  & x_{1,2} &x_{1,3} & x_{1,4} & x_{1,5} & 0\\
 0 & 0  & 0 &x_{2,4} &x_{2,5} & x_{2,6}
  \end{bmatrix}.$$

Let $L=I_2(\bfX)$ and consider the special fiber ring $\calF(L)$. According to \Cref{thm: Gor1ForEpsilon} the ring $\calF(L)$ is not Gorenstein. We claim that $\calF(
L)$ is $F$-regular. Our proof uses the Glassbrenner and Jacobian criterions.
Let $T\coloneq\mathbb{K}[T_{(i,j)} : (i,j)\in \calL]$ and recall that $\calF(L)\cong T/\calK$,  where $\calK$ is the defining ideal of $\calF(L)$ and can be described explicitly as in \cite[Theorem~5.5]{LinShenLadder}.

Let $S=T/\calK$. By \cite[Theorem 3.1]{Glass}, it suffices to find a homogeneous element $\alpha\in T$ $\alpha\in T=\mathbb{K}[T_{(i,j)} : (i,j)\in \calL]$ with $\alpha\notin \calK$ for which $S_{\alpha}$ is regular and $$\alpha(\calK^{[p^e]}):\calK)\not \subseteq \frak{m}^{[p]},$$ where $\frak{m}$ is the homogeneous maximal ideal of $T$ and $e$ is some positive integer. We claim that $\alpha=T_{(1,4)}$ satisfies the conditions above.

We first construct a complete intersection ideal $\frak{a}\subset \calK$ of height $\Ht(\calK)$. Let $\frak{a}=(f_1,\ldots, f_5)$, where 
 \begin{align*}
     f_1&=T_{(4,5)}T_{(3,6)}-T_{(3,5)}T_{(4,6)}+T_{(3,4)}T_{(5,6)},\\
     f_2&= T_{(4,5)}T_{(2,6)}-T_{(2,5)}T_{(4,6)}+T_{(2,4)}T_{(5,6)},\\
     f_3&=T_{(3,5)}T_{(2,6)}-T_{(2,5)}T_{(3,6)},\\
     f_4&=T_{(2,5)}T_{(1,6)}-T_{(1,5)}T_{(2,6)},\\
     f_5&=T_{(3,4)}T_{(1,5)}-T_{(1,4)}T_{(3,5)}.
 \end{align*}
 By \cite[Theorem 5.5]{LinShenLadder}, $f_i\in \calK$ for all $i\in [5]$. We will prove that $f_1,\ldots, f_5$ is a regular sequence of length $5$ by showing that the initial terms of those polynomials with respect to a term order $\succ$ are co-prime.  Let $\succ$ be the graded reverse lexicographic order on $T$ induced by the following order on the variables of $T$:
$$T_{(2,4)}>T_{(3,4)}>T_{(3,5)}>T_{(4,5)}>T_{(1,6)}>T_{(3,6)}>T_{(5,6)}>T_{(2,6)}>T_{(4,6)}>T_{(2,5)}>T_{(1,5)}>T_{(1,4)}.$$ 
 Then 
$\ini_{\succ}f_1=T_{(4,5)}T_{(3,6)}$, $\ini_{\succ}f_2=T_{(2,4)}T_{(5,6)}$, $\ini_{\succ}f_3=T_{(3,5)}T_{(2,6)},\ini_{\succ}f_4=T_{(2,5)}T_{(1,6)}$, $\ini_{\succ}f_5= T_{(3,4)}T_{(1,5)}$, and hence $f_1,\ldots, f_5$ is a regular sequence of length $5$. Note that $\dim T=|\calL|={6 \choose 2}-3=12$ and hence $\Ht(\calK)=5$, since $\dim (\calF(L))=\sum_{i=1}^2 \Delta_i+1=4+2+1=7$, by \cite[Theorem 3.4]{CFGLLLM}. Hence $\Ht(\frak{a})=\Ht(\calK)=5$.

We now show that  $T_{(1,4)} (\calK^{[p]}:\calK) \not \subseteq \frak{m}^{[p]} $. Notice that 
$$(\ini_{\succ}f_1)^{p-1} \cdots (\ini_{\succ}f_5)^{p-1}=\ini_{\succ}(f_1 \cdots f_5)^{p-1}\in \ini_{\succ}(\frak{a}^{[p]}:\frak{a}) \subseteq \ini_{\succ}(\calK^{[p]}:\calK),$$
where the last containment holds by \cite[Corollary 3.3]{PT}. Hence $$T_{(1,4)}(\ini_{\succ}f_1)^{p-1} \cdots (\ini_{\succ}f_5)^{p-1}\in T_{(1,4)}\ini_{\succ}(\calK^{[p]}:\calK).$$
By construction,  
$$T_{(1,4)}(\ini_{\succ}f_1)^{p-1} \cdots (\ini_{\succ}f_5)^{p-1} \not \in \frak{m}^{[p]}$$  and therefore, $T_{(1,4)}(\calK^{[p]}: \calK) \not \subseteq \frak{m}^{[p]}$.

It remains to show that $S_{\alpha}$ is regular, where $\alpha=T_{(1,4)}$. 
For this we apply the Jacobian Criterion, \cite[Corollary 16.20]{EiComm}. Let $J$ be the ideal generated by all the $5\times 5$ minors of the Jacobian matrix of $\calK$ and consider the following generators in the ideal $\calK$ that involve $T_{(1,4)}$:
\begin{align*}
    g_1&=T_{(1,4)}T_{(2,5)}-T_{(1,5)}T_{(2,4)}, \
    g_2=T_{(1,4)}T_{(3,5)}-T_{(1,5)}T_{(3,4)}, \ g_3=T_{(1,4)}T_{(2,6)}-T_{(1,6)}T_{(2,4)},\\ 
    g_4&=T_{(1,4)}T_{(3,6)}-T_{(1,6)}T_{(3,4)},\ g_5=T_{(1,4)}T_{(5,6)}-T_{(1,5)}T_{(4,6)}+T_{(1,6)}T_{(4,5)} .
\end{align*}
Notice that the matrix 
$${\begin{bmatrix}
    \frac{\partial g_1}{\partial T_{(2,5)}} &     \frac{\partial g_1}{\partial T_{(3,5)}}  &   \frac{\partial g_1}{\partial T_{(2,6)}}     &    \frac{\partial g_1}{\partial T_{(3,6)}}    &    \frac{\partial g_1}{\partial T_{(5,6)}}    \\
    \frac{\partial g_2}{\partial T_{(2,5)}}       & \frac{\partial g_2}{\partial T_{(3,5)}} &   \frac{\partial g_2}{\partial T_{(2,6)}}    &    \frac{\partial g_2}{\partial T_{(3,6)}}    &    \frac{\partial g_2}{\partial T_{(5,6)}}   \\
    \frac{\partial g_3}{\partial T_{(2,5)}}        & \frac{\partial g_3}{\partial T_{(3,5)}}       & \frac{\partial g_3}{\partial T_{(2,6)}} &    \frac{\partial g_3}{\partial T_{(3,6)}}   &   \frac{\partial g_3}{\partial T_{(5,6)}}     \\
    \frac{\partial g_4}{\partial T_{(2,5)}}        & \frac{\partial g_4}{\partial T_{(3,5)}}        & \frac{\partial g_4}{\partial T_{(2,6)}}         & \frac{\partial g_4}{\partial T_{(3,6)}} &   \frac{\partial g_4}{\partial T_{(5,6)}}     \\
    \frac{\partial g_5}{\partial T_{(2,5)}}        & \frac{\partial g_5}{\partial T_{(3,5)}}      & \frac{\partial g_5}{\partial T_{(2,6)}}        & \frac{\partial g_5}{\partial T_{(3,6)}}        & \frac{\partial g_5}{\partial T_{(5,6)}}\\
\end{bmatrix}}\small{=\begin{bmatrix}
    T_{(1,4)} &     0   &   0     &    0    &    0    \\
    0        & T_{(1,4)} &   0     &    0    &    0    \\
    0        & 0        & T_{(1,4)} &    0    &   0     \\
    0        & 0        & 0        & T_{(1,4)} &   0     \\
    0        & 0        & 0        & 0        & T_{(1,4)}\\

\end{bmatrix}}$$
is a submatrix of the Jacobian matrix of $\calK$ and hence $T_{(1,4)}\in \sqrt{J}$. Any prime ideal of $S_{\alpha}$ is of the form $P_{\alpha}$, where $P\in \Spec(S)$ with $\alpha\not \in P$. For any prime $P\in \Spec(S)$ with $T_{(1,4)}\not\in P$, we have $(S_{\alpha})_{P_{\alpha}}\cong S_{P}$.  By the Jacobian criterion \cite[Corollary 16.20]{EiComm} $S_{P}$ is a regular local ring if and only if $J\not \subset P$. Since $T_{(1,4)}\in \sqrt{J} \setminus P$, we have $S_{P}$ is regular and therefore, $S_{\alpha}$ is regular as claimed.
\end{Example}

The methods of \Cref{ex: F-regular} can not be extended in general. This leads to a natural question.
\begin{Question}
     Under what conditions is $\calF(L)$ $F$-regular?
\end{Question}

\vspace{0.5cm}

\begin{acknowledgment*}
We thank the reviewer for their careful reading of the manuscript and their thoughtful suggestions.

We thank New Mexico State University for hosting our group in the summer of 2025 and providing partial travel support. The travel to New Mexico State University was partially funded by the NSF grant DMS–2433082.
Kuei-Nuan Lin was partially supported by the AMS–Simons Research Enhancement Grants for Primarily Undergraduate Institution Faculty (2024).
Maral Mostafazadehfard was partially funded by CAPES–Brasil (Finance Code 001) and PROEX/-CAPES and benefited from APQ1-Faperj 211.012/2024 - SEI-260003/006427/2024.
Finally, Haydee Lindo was supported by the Simons Laufer Mathematical Sciences Institute.
\end{acknowledgment*}

\bibliography{ReesBib}

\end{document}